
\documentclass[12pt]{article}%
\usepackage{epsf}
\usepackage{amssymb}
\usepackage{amsmath}
\usepackage{amsfonts}
\usepackage{graphicx}%
\setcounter{MaxMatrixCols}{30}
\newtheorem{theorem}{Theorem}[section]
\newtheorem{acknowledgement}[theorem]{Acknowledgement}
\newtheorem{algorithm}[theorem]{Algorithm}

\newtheorem{conjecture}[theorem]{Conjecture}
\newtheorem{corollary}[theorem]{Corollary}

\newtheorem{definition}[theorem]{Definition}

\newtheorem{lemma}[theorem]{Lemma}

\newtheorem{proposition}[theorem]{Proposition}

\newenvironment{proof}[1][Proof]{\noindent\textbf{#1.} }{\ \rule{0.5em}{0.5em}}
\begin{document}

\author{Yulia Kempner $^{1}$\\$^{1}$ Holon Institute of Technology, ISRAEL\\yuliak@hit.ac.il
\and Vadim E. Levit $^{1,2}$\\$^{2}$ Ariel University Center of Samaria, ISRAEL\\levitv@ariel.ac.il}
\title{Geometry of antimatroidal point sets}
\date{}
\maketitle

\begin{abstract}
The notion of "antimatroid with repetition" was conceived by Bjorner, Lovasz
and Shor in 1991 as a multiset extension of the notion of antimatroid
\cite{ChipF}. When the underlying set consists of only two elements, such
two-dimensional antimatroids correspond to point sets in the plane. In this
research we concentrate on geometrical properties of antimatroidal point sets
in the plane and prove that these sets are exactly parallelogram polyominoes.
Our results imply that two-dimensional antimatroids have convex dimension $2$.
The second part of the research is devoted to geometrical properties of
three-dimensional antimatroids closed under intersection.

\bigskip\textbf{Keywords: } antimatroid, convex dimension, lattice animal, polyomino.

\end{abstract}

\section{Preliminaries}

An antimatroid is an accessible set system closed under union \cite{BZ}. An
algorithmic characterization of antimatroids based on the language definition
was introduced in \cite{BF}. Another algorithmic characterization of
antimatroids that depicted them as set systems was developed in \cite{KL}.
While classical examples of antimatroids connect them with posets, chordal
graphs, convex geometries, etc., game theory gives a framework in which
antimatroids are interpreted as permission structures for coalitions
\cite{BJ}. There are also rich connections between antimatroids and cluster
analysis \cite{Malta}. In mathematical psychology, antimatroids are used to
describe feasible states of knowledge of a human learner \cite{Eppstein}.

In this paper we investigate the correspondence between antimatroids and
polyominoes. In the digital plane ${\mathbb{Z}}^{2}$, \emph{a polyomino}
\cite{Golomb} is a finite connected union of unit squares without cut points.
If we replace each unit square of a polyomino by a vertex at its center, we
obtain an equivalent object named \emph{a lattice animal} \cite{LA}. Further,
we use the name polyomino for the two equivalent objects.

A polyomino is called column-convex (resp. row-convex) if all its columns
(resp. rows) are connected (in other words, each column/row has no holes). A
convex polyomino is both row- and column-convex. The parallelogram polyominoes
\cite{DeLungo,DV,DDD}, sometimes known as staircase polygons \cite{BM,NB}, are
a particular case of this family. They are defined by a pair of monotone paths
made only with north and east steps, such that the paths are disjoint, except
at their common ending points.

In this paper we prove that antimatroidal point sets in the plane and
parallelogram polyominoes are equivalent.

Let $E$ be a finite set. A \textit{set system }over $E$ is a pair
$(E,\mathcal{F})$, where $\mathcal{F}$ is a family of sets over $E$, called
\textit{feasible }sets. We will use $X\cup x$ for $X\cup\{x\}$, and $X-x$ for
$X-\{x\}$.

\begin{definition}
\cite{Greedoids}A finite non-empty set system $(E,\mathcal{F})$ is an
\textit{antimatroid }if

$(A1)$ for each non-empty $X\in\mathcal{F}$, there exists $x\in X$ such that
$X-x\in\mathcal{F}$

$(A2)$ for all $X,Y\in\mathcal{F}$, and $X\nsubseteq Y$, there exists $x\in
X-Y$ such that $Y\cup x\in\mathcal{F}$.
\end{definition}

\smallskip Any set system satisfying $(A1)$ is called \textit{accessible.}

In addition, we use the following characterization of antimatroids.

\begin{proposition}
\cite{Greedoids} For an accessible set system $(E,\mathcal{F})$ the following
statements are equivalent:

$(i)$ $(E,\mathcal{F})$ is an antimatroid

$(ii)$ $\mathcal{F}$ is closed under union ($X,Y\in\mathcal{F}\Rightarrow
X\cup Y\in\mathcal{F}$)
\end{proposition}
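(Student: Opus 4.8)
The plan is to observe that accessibility $(A1)$ is assumed throughout, so it suffices to prove, under $(A1)$, that axiom $(A2)$ is equivalent to closure under union. I will establish the two implications separately, each by an induction on the cardinality of a set difference, the real content residing entirely in the second direction.

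For $(i)\Rightarrow(ii)$, I would fix $X,Y\in\mathcal{F}$ and argue by induction on $|X-Y|$ that $X\cup Y\in\mathcal{F}$. If $|X-Y|=0$ then $X\subseteq Y$ and $X\cup Y=Y\in\mathcal{F}$. Otherwise $X\nsubseteq Y$, so $(A2)$ furnishes $x\in X-Y$ with $Y\cup x\in\mathcal{F}$; since $|X-(Y\cup x)|=|X-Y|-1$, the induction hypothesis applied to $X$ and $Y\cup x$ gives $X\cup(Y\cup x)\in\mathcal{F}$, and this set equals $X\cup Y$ because $x\in X$. This direction is routine.

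The substance lies in $(ii)\Rightarrow(i)$. Given $X,Y\in\mathcal{F}$ with $X\nsubseteq Y$, I must produce some $x\in X-Y$ with $Y\cup x\in\mathcal{F}$. The idea is to exploit both hypotheses at once: put $Z=X\cup Y$, which lies in $\mathcal{F}$ by closure under union, and use accessibility to peel $Z$ down to the empty set, obtaining a chain $\emptyset=Z_{0}\subset Z_{1}\subset\cdots\subset Z_{m}=Z$ of feasible sets with $|Z_{i}|=i$. Unioning each term with $Y$ and invoking closure yields feasible sets $Y=Y\cup Z_{0},\,Y\cup Z_{1},\ldots,Y\cup Z_{m}=Z$ whose consecutive members differ by at most one element. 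Because $Y\neq Z$, there is a first index $j$ at which the union strictly grows; at that step the newly added element $x$ satisfies $Y\cup x\in\mathcal{F}$ and $x\in Z-Y=(X\cup Y)-Y=X-Y$, which is exactly $(A2)$.

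The main obstacle is the bookkeeping in this last direction: one must check that passing from $Y\cup Z_{i}$ to $Y\cup Z_{i+1}$ adds at most one element, and that the ``first growth'' element indeed lands in $X-Y$ rather than merely in $Z-Y$. Both points follow from the identity $Z-Y=X-Y$ together with the single-element increments of the accessibility chain, so no deeper machinery is required beyond a careful induction.
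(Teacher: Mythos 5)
Your proof is correct. Note that the paper does not prove this proposition at all: it is quoted from the cited monograph \cite{Greedoids} (Korte, Lov\'{a}sz, Schrader), so there is no internal proof to compare against. Your argument is the standard one: the easy direction by induction on $|X-Y|$ using $(A2)$, and for the converse, forming $Z=X\cup Y$, taking an accessibility chain $\emptyset=Z_{0}\subset Z_{1}\subset\dots\subset Z_{m}=Z$, unioning it with $Y$, and extracting the element added at the first index where $Y\cup Z_{i}$ strictly grows; that element lies in $Z-Y=X-Y$ and witnesses $(A2)$. Both directions are handled without gaps, including the two bookkeeping points you flag (single-element increments and the identity $Z-Y=X-Y$), so the proposal stands as a complete proof of the cited result.
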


Consider another property of antimatroids.

\begin{definition}
A set system $(E,\mathcal{F})$ satisfies the \textit{chain property }if for
all $X,Y\in\mathcal{F}$, and $X\subset Y$, there exists a chain $X=X_{0}%
\subset X_{1}\subset...\subset X_{k}=Y$ such that $X_{i}=X_{i-1}\cup
x_{i\text{ }}$and $X_{i}\in\mathcal{F}$ for $0\leq i\leq k$.
\end{definition}

It is easy to see that the chain property follows from $(A2)$, but these
properties are not equivalent.

A poly-antimatroid \cite{NM} is a generalization of the notion of the
antimatroid to multisets. \emph{A poly-antimatroid} is a finite non-empty
multiset system $(E,S)$ that satisfies the antimatroid properties $(A1)$ and
$(A2)$.

Let $E=\{x,y\}$. In this case each point $A=(x_{A},y_{A})$ in the digital
plane ${\mathbb{Z}}^{2}$ may be considered as a multiset $A$ over $E$, where
$x_{A}$ is a number of repetitions of an element $x$, and $y_{A}$ is a number
of repetitions of an element $y$ in multiset $A$. Consider a set of points in
the digital plane ${\mathbb{Z}}^{2}$ that satisfies the properties of an
antimatroid. That is a two-dimensional poly-antimatroid.

\begin{definition}
\label{Def} A set of points $S$ in the digital plane ${\mathbb{Z}}^{2}$ is an
antimatroidal point set if

$(A1)$ for every point $(x_{A},y_{A})\in S$, such that $(x_{A},y_{A}%
)\neq(0,0)$,

either $(x_{A}-1,y_{A})\in S$ or $(x_{A},y_{A}-1)\in S$

$(A2)$ for all $A\nsubseteq B\in S$,

if $x_{A}\geq x_{B}$ and $y_{A}\geq y_{B}$ then either $(x_{B}+1,y_{B})\in S$
or $(x_{B},y_{B}+1)\in S$

if $x_{A}\leq x_{B}$ and $y_{A}\geq y_{B}$ then $(x_{B},y_{B}+1)\in S$

if $x_{A}\geq x_{B}$ and $y_{A}\leq y_{B}$ then $(x_{B}+1,y_{B})\in S$
\end{definition}

Accessibility implies that $\emptyset\in S$.

For example, see an antimatroidal point set in Figure \ref{int_fig1}.

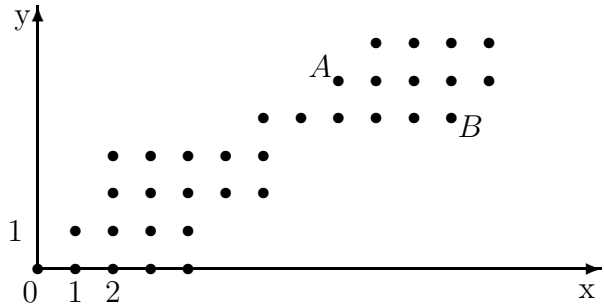
\begin{figure}[ptbh]
\setlength{\unitlength}{1cm}
\par
\begin{picture}(8,4.5)(-3,-0.5)\thicklines
\put(0,0){\vector(1,0){7.5}} \put(0,0){\vector(0,1){3.5}}
\multiput(0,0)(0.5,0){5}{\circle*{0.15}}
\multiput(0.5,0.5)(0.5,0){4}{\circle*{0.15}}
\multiput(1,1)(0.5,0){5}{\circle*{0.15}}
\multiput(1,1.5)(0.5,0){5}{\circle*{0.15}}
\multiput(3,2)(0.5,0){6}{\circle*{0.15}}
\multiput(4,2.5)(0.5,0){5}{\circle*{0.15}}
\multiput(4.5,3)(0.5,0){4}{\circle*{0.15}}
\put(7.3,-0.3){\makebox(0,0){x}} \put(-0.2,3.3){\makebox(0,0){y}}
\put(-0.1,-0.3){\makebox(0,0){0}} \put(0.5,-0.3){\makebox(0,0){1}}
\put(1,-0.3){\makebox(0,0){2}} \put(-0.3,0.5){\makebox(0,0){1}}
\put(3.75,2.7){\makebox(0,0){$A$}}
\put(5.75,1.9){\makebox(0,0){$B$}}
\end{picture}
\caption{An antimatroidal point set.}%
\label{int_fig1}%
\end{figure}

A three-dimensional point set is defined similarly to a two-dimensional set.

\section{Two-dimensional antimatroidal point sets and polyominoes}

In this section we consider a geometric characterization of two-dimensional
antimatroidal point sets. The following notation \cite{Digital} is used. If
$A=(x,y)$ is a point in a digital plane, the \textit{4-neighborhood}
$N_{4}(x,y)$ is the set of points
\[
N_{4}(x,y)=\{(x-1,y),(x,y-1),(x+1,y),(x,y+1)\}
\]
and \textit{8-neighborhood} $N_{8}(x,y)$ is the set of points
\[
N_{8}(x,y)=\{(x-1,y),(x,y-1),(x+1,y),(x,y+1),(x-1,y-1),
\]%
\[
(x-1,y+1),(x+1,y-1),(x+1,y+1)\}.
\]
Let $m$ be any of the numbers $4$ or $8$. A sequence $A_{0},A_{1},...,A_{n}$
is called an $N_{m}$-\textit{path }if $A_{i}\in N_{m}(A_{i-1})$ for each
$i=1,2,...n$. Any two points $A,B\in S$ are said to be $N_{m}$-connected in
$S$ if there exists an $N_{m}$-path $A=A_{0},A_{1},...,A_{n}=B$ from $A$ to
$B$ such that $A_{i}\in S$ for each $i=1,2,...n$.\ A digital set $S$ is an
$N_{m}$-\textit{connected} set if any two points $P$,$Q$ from $S$ are $N_{m}%
$-connected in $S$. An $N_{m}$-\textit{connected component} of a set $S$ is a
maximal subset of $S$, which is $N_{m}$-connected.

An $N_{m}$-path $A=A_{0},A_{1},...,A_{n}=B$ from $A$ to $B$ is called
\textit{a monotone increasing} $N_{m}$-path if $A_{i}\subset A_{i+1}$ for all
$0\leq i<n$, i.e.,
\[
(x_{A_{i}}<x_{A_{i+1}})\wedge(y_{A_{i}}\leq y_{_{A_{i+1}}})\text{ or
}(x_{_{A_{i}}}\leq x_{_{A_{i+1}}})\wedge(y_{_{A_{i}}}<y_{_{A_{i+1}}}).
\]

The chain property and the fact that the family of feasible sets
of an antimatroid is closed under union mean that for each two
points $A,B$: if $B\subset A$, then there is a monotone increasing
$N_{4}$-path from $B$ to $A$, and if $A$ is non-comparable with
$B$, then there is a monotone increasing $N_{4}$-path from both
$A$ and $B$ to $A\cup B=(\max(x_{A},x_{B}),\max(y_{A},y_{B}))$. In
particular, for each $A\in S$ there is a monotone decreasing
$N_{4}$-path from $A$ to $0$. So, we can conclude that an
antimatroidal point set is an $N_{4}$-connected component in the
digital plane ${\mathbb{Z}}^{2}$.

\begin{definition}
A point set $S\subseteq{\mathbb{Z}}^{2}$ is defined to be orthogonally convex
if, for every line L that is parallel to the x-axis ($y=y^{\ast}$) or to the
y-axis ($x=x^{\ast}$), the intersection of $S$ with $L$ is empty, a point, or
a single interval ($[(x_{1},y^{\ast}),(x_{2},y^{\ast})]=\{(x_{1},y^{\ast
}),(x_{1}+1,y^{\ast}),...,(x_{2},y^{\ast})\}$).
\end{definition}

It follows immediately from the chain property that any antimatroidal point
set $S$ is an orthogonally convex connected component\textit{.}

Thus, antimatroidal point sets are convex polyominoes.

In the following we prove that antimatroidal point sets in the plane closed
not only under union, but under intersection as well.

\begin{lemma}
\label{Square_P} An antimatroidal point set in the plane is closed under
intersection, i.e., if two points $A=(x_{A},y_{A})$ and $B=(x_{B},y_{B})$
belong to an antimatroidal point set $S$, then the point $A\cap B=(min(x_{A}%
,x_{B}),min(y_{A},y_{B}))\in S$.
\end{lemma}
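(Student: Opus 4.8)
The plan is to reduce to the one genuinely nonobvious configuration and then exploit the two structural facts already established before the statement: that from any feasible point there is a monotone decreasing $N_4$-path down to $0$, and that $S$ is orthogonally convex. First I would dispose of the comparable case: if $A\subseteq B$ or $B\subseteq A$, then $A\cap B$ is simply the smaller of the two points and already lies in $S$. So assume $A$ and $B$ are non-comparable. Reading off the meaning of $\subseteq$ on points, non-comparability forces the two coordinates to be ordered oppositely; after possibly interchanging $A$ and $B$, we may assume $x_A<x_B$ and $y_A>y_B$. Then $A\cap B=(x_A,y_B)=:M$, and this is the point we must place in $S$. Geometrically $M$ sits directly below $A$ on the vertical line $x=x_A$ and directly to the left of $B$ on the horizontal line $y=y_B$.

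Next I would produce a feasible point on the line $y=y_B$ lying weakly to the left of $M$. Take a monotone decreasing $N_4$-path from $A$ to $0$, which exists by the discussion preceding the lemma. Along this path the $y$-coordinate decreases from $y_A$ to $0$ one unit at a time, so it attains the value $y_B$ (recall $0\le y_B<y_A$); let $P=(x_P,y_B)$ be a point of the path with that $y$-coordinate. Because the path is monotone decreasing, its $x$-coordinate is non-increasing along the way, and therefore $x_P\le x_A$.

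Finally I would close the gap using orthogonal convexity. Both $P=(x_P,y_B)$ and $B=(x_B,y_B)$ lie in $S$ on the horizontal line $y=y_B$, and we have $x_P\le x_A<x_B$. Since $S$ meets this line in a single interval, every point $(x,y_B)$ with $x_P\le x\le x_B$ is feasible; in particular $M=(x_A,y_B)\in S$, which is exactly $A\cap B$, as required.

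The only step demanding care — the nearest thing to an obstacle — is the second one: I must be certain the decreasing path actually meets the line $y=y_B$ and that the crossing point is weakly to the left of $x_A$. Both facts follow from $N_4$-monotonicity, namely that each step alters exactly one coordinate by one and that $x$ never increases; but the argument relies essentially on having the monotone decreasing path in hand, rather than working directly from the raw axioms $(A1)$ and $(A2)$.
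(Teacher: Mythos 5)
Your proof is correct and takes essentially the same route as the paper: both arguments handle the comparable case trivially, then use the monotone decreasing $N_{4}$-path from $A$ to $0$ to obtain a feasible point $(x_{P},y_{B})$ with $x_{P}\leq x_{A}$, and finally fill in the horizontal segment from that point to $B$ to capture $A\cap B=(x_{A},y_{B})$. The only cosmetic difference is in the last step, where the paper cites the monotone increasing $N_{4}$-path from $(x_{P},y_{B})$ to $B$ (chain property) while you invoke orthogonal convexity; both facts are established before the lemma and are interchangeable here.
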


\begin{proof}
The proposition is evident for two comparable points. Consider two
non-comparable points $A$ and $B$, and assume without loss of generality that
$x_{A}<x_{B}$ and $y_{A}>y_{B}$. Then there is a monotone decreasing $N_{4}%
$-path from $A$ to $0$, and so there is a point $C=(x_{C},y_{B})\in S$ on this
path with $x_{C}\leq x_{A}$. Hence, the point $A\cap B$ belongs to $S$, since
it is located on the monotone increasing $N_{4}$-path from $C$ to $B$.
\end{proof}

Lemma \ref{Square_P} implies that every antimatroidal point set is a union of
rectangles built on each pair of non-comparable points. The following theorem
shows that antimatroidal point sets in the plane and parallelogram polyominoes
are equivalent.

\begin{theorem}
\label{Antimatroid_Set Theorem} A set of points $S$ in the digital plane
${\mathbb{Z}}^{2}$ is an antimatroidal point set if and only if it is an
orthogonally convex $N_{4}$-connected set that is bounded by two monotone
increasing $N_{4}$-paths between $(0,0)$ and $(x_{max},y_{max})$.
\end{theorem}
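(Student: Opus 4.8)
The plan is to prove the biconditional by establishing each direction separately, leveraging the structural results already accumulated in the excerpt. For the forward direction, most of the work is already done: the chain property shows that an antimatroidal point set $S$ is orthogonally convex and $N_{4}$-connected, so it remains only to verify that the boundary of $S$ consists of exactly two monotone increasing $N_{4}$-paths joining $(0,0)$ to $(x_{max},y_{max})$. Here $x_{max}=\max\{x_A : A\in S\}$ and $y_{max}=\max\{y_A : A\in S\}$, and the point $(x_{max},y_{max})$ itself lies in $S$ because $S$ is closed under union (Proposition on closure under union). The idea is that orthogonal convexity forces each row and each column of $S$ to be a single interval, so for every $x$-coordinate present in $S$ the column has a well-defined lowest point and highest point; tracing the lowest points yields a lower boundary path and tracing the highest points yields an upper boundary path. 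I would argue that each of these two boundaries is a monotone increasing $N_{4}$-path from $(0,0)$ to $(x_{max},y_{max})$, using accessibility (A1) and property (A2) to rule out any non-monotone jog in the boundary.

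For the converse direction, I would assume $S$ is an orthogonally convex $N_{4}$-connected set bounded by two monotone increasing $N_{4}$-paths between $(0,0)$ and $(x_{max},y_{max})$, and verify properties (A1) and (A2) of Definition \ref{Def} directly. Accessibility (A1) should follow by observing that any point $A\neq(0,0)$ lies between the two bounding paths; since both boundary paths reach $(0,0)$ by steps that decrease a coordinate by one, and the region between them is orthogonally convex, at least one of $(x_A-1,y_A)$ or $(x_A,y_A-1)$ must again lie in $S$. The key is that a point with neither left nor lower neighbor in $S$ would have to be a simultaneous local minimum of both coordinates, which the geometry of two monotone increasing paths emanating from the single common origin $(0,0)$ forbids.

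The verification of (A2) is what I expect to be the main obstacle, since it has three cases depending on the comparability pattern of the two points. For the two cases where $A$ and $B$ are non-comparable (say $x_A\leq x_B$ and $y_A\geq y_B$), I would use orthogonal convexity of the column through $x_B$ together with the presence of the point $A$ (or a point on the monotone path to $A\cup B$) above $B$ to force $(x_B,y_B+1)\in S$, exactly mirroring the intersection-closure argument in the proof of Lemma \ref{Square_P}. For the comparable case $x_A\geq x_B$ and $y_A\geq y_B$ with $A\neq B$, I would show that $B$ cannot be a local maximum in the region: because $B$ lies weakly below-left of $A$ and both are trapped between two monotone increasing paths to the common endpoint $(x_{max},y_{max})$, at least one of $(x_B+1,y_B)$ or $(x_B,y_B+1)$ must remain inside the region. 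The delicate point throughout is translating the purely geometric hypothesis ``bounded by two monotone increasing $N_{4}$-paths'' into the local step-by-step neighbor conditions of the antimatroid axioms; I would handle this by always reasoning about the single interval that orthogonal convexity guarantees in each row and column, so that the existence of a required neighbor reduces to showing the target point lies weakly between the two boundary paths.
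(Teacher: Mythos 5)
Your overall decomposition mirrors the paper's proof: forward direction via the chain property (orthogonal convexity, $N_4$-connectivity) plus a lower/upper boundary construction, converse direction by verifying (A1) and (A2) of Definition \ref{Def} case by case. However, there is a genuine gap in your treatment of the hardest case of the converse, namely (A2) with $x_A\leq x_B$ and $y_A\geq y_B$, where $(x_B,y_B+1)\in S$ must be produced. Your proposed mechanism --- orthogonal convexity of the column through $x_B$ together with the presence of $A$, ``or a point on the monotone path to $A\cup B$,'' mirroring the proof of Lemma \ref{Square_P} --- does not work as stated. Since $x_A$ may be strictly smaller than $x_B$, the point $A$ lies in a different column, so convexity of column $x_B$ plus $A$ alone yields nothing; what you need is a point of $S$ in column $x_B$ strictly above $y_B$ (for instance $A\cup B$), and neither Lemma \ref{Square_P} nor ``the monotone path to $A\cup B$'' can supply it here, because both are consequences of $S$ being antimatroidal (chain property, closure under union) --- exactly what you are in the middle of proving. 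That reasoning is circular. The non-circular argument, which is the one the paper gives, uses only the stated hypotheses: if $(x_B,y_B+1)\notin S$, then $B$ lies on the upper bounding path; since $A\in S$ must lie weakly below that path, the path has height at least $y_A>y_B$ over column $x_A$, and because the path is monotone increasing and $x_A\leq x_B$, its height over column $x_B$ also exceeds $y_B$, contradicting the fact that it passes through $B$ as the top of that column.

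A secondary, fixable defect lies in your forward direction: the set of column-minima is not in general an $N_4$-path, since the minima of consecutive columns can differ by more than one in the $y$-coordinate (this already happens in Figure \ref{int_fig2}, where the lower boundary climbs vertically between $B_4$ and $B_8$ through points that are not minima of their columns). The path must be completed by vertical runs consisting of points that are rightmost in their rows; this is precisely why the paper defines $\mathcal{B}_{lower}$ by the three conditions $(x+1,y)\notin S\vee(x,y-1)\notin S\vee(x+1,y-1)\notin S$, the diagonal condition supplying those runs. Also, ``ruling out a non-monotone jog'' is exactly closure under intersection, i.e.\ Lemma \ref{Square_P}, so you should invoke it (or reprove it via the monotone decreasing path to the origin) rather than appeal vaguely to (A1) and (A2). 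These forward-direction issues are repairable within your plan; the circularity in case (ii) of (A2) is the point that requires a different idea.
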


To prove the "if" part of Theorem \ref{Antimatroid_Set Theorem} it remains to
demonstrate that every antimatroidal point set $S$ is bounded by two monotone
increasing $N_{4}$-paths between $(0,0)$ and $(x_{max},y_{max})$. To give a
definition of the boundary we begin with the following notions.

A point $A$ in set $S$ is called an\textit{\ interior point} in $S$ if
$N_{8}(A)\in S$. A point in $S$ which is not an interior point is called a
\textit{boundary point}. All boundary points of $S$ constitute the boundary of
$S$. We can see an antimatroidal point set with its boundary in Figure
\ref{int_fig2}.

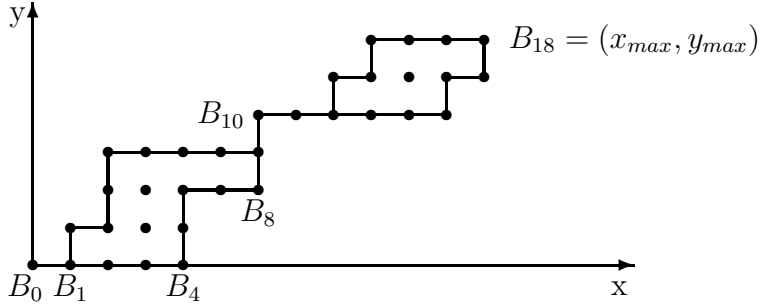
\begin{figure}[ptbh]
\setlength{\unitlength}{1cm}
\par
\begin{picture}(8,4.5)(-3,-0.5)\thicklines
\put(0,0){\vector(1,0){8}} \put(0,0){\vector(0,1){3.5}}
\multiput(0,0)(0.5,0){5}{\circle*{0.15}}
\multiput(0.5,0.5)(0.5,0){4}{\circle*{0.15}}
\multiput(1,1)(0.5,0){5}{\circle*{0.15}}
\multiput(1,1.5)(0.5,0){5}{\circle*{0.15}}
\multiput(3,2)(0.5,0){6}{\circle*{0.15}}
\multiput(4,2.5)(0.5,0){5}{\circle*{0.15}}
\multiput(4.5,3)(0.5,0){4}{\circle*{0.15}}
\put(3,2){\line(1,0){2.5}} \put(0.5,0.5){\line(1,0){0.5}}
\put(1,1.5){\line(1,0){2}} \put(4,2.5){\line(1,0){0.5}}
\put(4.5,3){\line(1,0){1.5}} \put(2,1){\line(1,0){1}}
\put(5.5,2.5){\line(1,0){0.5}} \put(0.5,0){\line(0,1){0.5}}
\put(1,0.5){\line(0,1){1}} \put(2,0){\line(0,1){1}}
\put(3,1){\line(0,1){1}} \put(4,2){\line(0,1){0.5}}
\put(5.5,2){\line(0,1){0.5}} \put(4.5,2.5){\line(0,1){0.5}}
\put(6,2.5){\line(0,1){0.5}} \put(7.8,-0.3){\makebox(0,0){x}}
\put(-0.2,3.3){\makebox(0,0){y}}
\put(-0.1,-0.3){\makebox(0,0){$B_{0}$}}
\put(0.5,-0.3){\makebox(0,0){$B_{1}$}}
\put(2,-0.3){\makebox(0,0){$B_{4}$}}
\put(3,0.7){\makebox(0,0){$B_{8}$}}
\put(8,3){\makebox(0,0){$B_{18}=(x_{max},y_{max})$}}
\put(2.5,2){\makebox(0,0){$B_{10}$}}
\end{picture}
\caption{A boundary of an antimatroidal point set.}%
\label{int_fig2}%
\end{figure}

Since antimatroidal sets in the plane are closed under union and under
intersection, there are six types of boundary points that we divide into two
sets -- lower and upper boundary:
\[
\mathcal{B}_{lower}=\{(x,y)\in S:(x+1,y)\notin S\vee(x,y-1)\notin
S\vee(x+1,y-1)\notin S\}
\]%
\[
\mathcal{B}_{upper}=\{(x,y)\in S:(x-1,y)\notin S\vee(x,y+1)\notin
S\vee(x-1,y+1)\notin S\}
\]

It is possible that $\mathcal{B}_{lower}\cap\mathcal{B}_{upper}\neq\emptyset$.
For example, the point $B_{10}$ in Figure \ref{int_fig2} belongs to both the
lower and upper boundaries.

\begin{lemma}
\label{Increase}The lower boundary is a monotone increasing path between
$(0,0)$ and $(x_{max},y_{max})$.
\end{lemma}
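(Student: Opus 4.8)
The plan is to realize the lower boundary as a staircase built from the column envelopes of $S$. For each column index $x$ with $0\le x\le x_{max}$ I would set $f(x)=\min\{y:(x,y)\in S\}$ and $g(x)=\max\{y:(x,y)\in S\}$; orthogonal convexity guarantees the column is exactly the interval $[f(x),g(x)]$. First I would record three structural facts. Using the monotone increasing $N_4$-path from $(0,0)$ to $(x_{max},y_{max})$ (guaranteed by the chain property together with $(0,0),(x_{max},y_{max})\in S$), every column $0,\dots,x_{max}$ is nonempty, and that path crosses from column $x$ to column $x+1$ by an east step, which supplies a common height and hence the overlap $f(x+1)\le g(x)$. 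Next, closure under intersection gives that $f$ is nondecreasing: for $x<x'$ the point $(x,f(x))\cap(x',f(x'))=(x,\min(f(x),f(x')))$ lies in $S$, forcing $f(x)\le f(x')$. Dually, closure under union gives that $g$ is nondecreasing: $(x,g(x))\cup(x',g(x'))=(x',\max(g(x),g(x')))\in S$ forces $g(x)\le g(x')$.

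With these in hand I would define a candidate path $L$ greedily from $(0,0)$: at a point $(x,y)\in S$, step east to $(x+1,y)$ when that point is in $S$, and otherwise step north to $(x,y+1)$. The first task is to see this terminates correctly, namely that the only point of $S$ whose east and north neighbours are both absent is $(x_{max},y_{max})$; this holds because any other point $P$ satisfies $P\cup(x_{max},y_{max})=(x_{max},y_{max})$ and so admits a monotone increasing $N_4$-path to it, whose first step exhibits an available east or north neighbour. A short induction then shows $L$ enters each column $x$ exactly at its bottom $f(x)$, climbs to height $f(x+1)$ (the segment staying in $S$ because $f(x+1)\le g(x)$), and steps east; in the final column it climbs to $g(x_{max})=y_{max}$. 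Thus $L$ is a monotone increasing $N_4$-path from $(0,0)$ to $(x_{max},y_{max})$, occupying precisely $\{(x,y):f(x)\le y\le f(x+1)\}$ in each column $x<x_{max}$ and the whole of the last column.

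It then remains to prove $L=\mathcal{B}_{lower}$. The inclusion $L\subseteq\mathcal{B}_{lower}$ I would read off the greedy construction: after an east step the new point has its south neighbour absent, while a north step is taken exactly when the east neighbour is absent, so after a north step the new point has its southeast neighbour absent; either way the defining condition of $\mathcal{B}_{lower}$ holds. For the reverse inclusion I would show that a lower-boundary point $(x,y)$ with $x<x_{max}$ cannot have $y>f(x+1)$: the south clause would force $y=f(x)\le f(x+1)$, a contradiction, while the east and southeast clauses both force $y>g(x+1)$, hence $g(x)\ge y>g(x+1)$, contradicting the monotonicity of $g$. The remaining points, those of the last column, automatically lie in $\mathcal{B}_{lower}$ (their east neighbour is always absent) and also in $L$.

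The step I expect to be the main obstacle is the southeast-neighbour clause in the definition of $\mathcal{B}_{lower}$. The naive picture "lower boundary $=$ bottoms of columns and right ends of rows" is not quite right, since closure under intersection does not by itself fill in a missing southeast neighbour; ruling out spurious boundary points above the level $f(x+1)$ is exactly where the monotonicity of the \emph{upper} envelope $g$ (and not merely of $f$) is needed, and this is the delicate part of the argument.
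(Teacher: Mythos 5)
Your proof is correct, but it takes a genuinely different route from the paper's. The paper's argument is a three-line proof by contradiction: it orders $\mathcal{B}_{lower}$ lexicographically and notes that if two of its points were non-comparable, then closure under intersection (Lemma \ref{Square_P}), together with closure under union, would fill in the whole rectangle spanned by them, so the north-west point of the pair would have its east, south and south-east neighbours all in $S$ and hence could not be a lower boundary point at all. You instead build the boundary explicitly: you introduce the column envelopes $f$ and $g$, prove both are nondecreasing (intersection gives monotonicity of $f$, union gives monotonicity of $g$), construct the greedy east-preferring staircase $L$, and establish the set equality $L=\mathcal{B}_{lower}$ clause by clause against the definition of $\mathcal{B}_{lower}$. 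Your route is longer, but it buys more: since $L$ is by construction a monotone increasing $N_{4}$-path, your argument simultaneously proves the paper's subsequent lemma (that the lower boundary is an $N_{4}$-path between the same endpoints), yields the explicit per-column description $\mathcal{B}_{lower}=\{(x,y):f(x)\le y\le f(x+1)\}$ (with the last column treated separately), and in effect verifies the correctness of the paper's later lower-boundary tracing algorithm. The paper's proof, by contrast, isolates exactly the one fact needed for this lemma --- no two lower boundary points are incomparable --- and gets it immediately from the rectangle argument. Your closing observation is also accurate: the south-east clause is precisely where monotonicity of the upper envelope $g$, and not merely of $f$, is required, and that is the ingredient the paper's rectangle argument supplies in a single stroke.
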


\begin{proof}
Let $B_{0}=(0,0),B_{1},...,B_{k}=(x_{max},y_{max})$ be a lexicographical order
of $\mathcal{B}_{lower}$. We will prove that it is a monotone increasing path.
Suppose the opposite. Then there is a pair of non-comparable points $B_{i}$
and $B_{j}$, such that $i<j$, $x_{B_{i}}<x_{B_{j}}$ and $y_{B_{i}}>y_{B_{j}}$.
Then Lemma \ref{Square_P} implies that there is a rectangle built on the pair
of non-comparable points. Hence, the point $B_{i}$ does not belong to the
lower boundary, which is a contradiction.
\end{proof}

\begin{lemma}
\label{Inside}For each $B_{k}=(x,y)\in\mathcal{B}_{lower}$ holds:

(i) if $A=(x,z)\in S$ and $A\notin\mathcal{B}_{lower}$ then $z>y$;

(ii) if $A=(z,y)\in S$ and $A\notin\mathcal{B}_{lower}$ then $z<x$.
\end{lemma}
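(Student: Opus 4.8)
The plan is to argue by contradiction in each part, drawing on the two closure properties already available — closure under union (established above, since an antimatroidal point set is an antimatroid) and closure under intersection (Lemma \ref{Square_P}) — together with orthogonal convexity. The organizing observation is that membership of $B_k$ in $\mathcal{B}_{lower}$ gives a three-way disjunction (one of three specified neighbours is absent from $S$), and I would rule out each disjunct in turn.

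For part (i), fix $B_k=(x,y)\in\mathcal{B}_{lower}$ and suppose $A=(x,z)\in S\setminus\mathcal{B}_{lower}$ with $z\le y$. The case $z=y$ is impossible, since it would force $A=B_k$ to lie simultaneously inside and outside $\mathcal{B}_{lower}$, so I may assume $z\le y-1$. Unwinding the definition, $A\notin\mathcal{B}_{lower}$ yields in particular $(x+1,z)\in S$, while $B_k\in\mathcal{B}_{lower}$ means at least one of $(x+1,y)$, $(x,y-1)$, $(x+1,y-1)$ is absent from $S$. Orthogonal convexity applied to the column $x=x^{\ast}$ immediately eliminates the case $(x,y-1)\notin S$: both $(x,z)$ and $(x,y)$ lie in $S$ and $z\le y-1\le y$, so the whole vertical interval, and hence $(x,y-1)$, lies in $S$. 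The two remaining cases I would dispatch by closure under union, using $(x,y)\cup(x+1,z)=(x+1,y)$ and $(x,y-1)\cup(x+1,z)=(x+1,y-1)$, both of which are forced into $S$ precisely because $z\le y-1<y$; each contradicts the corresponding non-membership.

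Part (ii) is the transpose argument with intersection in place of union. Fix $B_k=(x,y)\in\mathcal{B}_{lower}$ and suppose $A=(z,y)\in S\setminus\mathcal{B}_{lower}$ with $z\ge x$; again $z=x$ is excluded outright, so $z\ge x+1$. Now $A\notin\mathcal{B}_{lower}$ supplies $(z,y-1)\in S$, and orthogonal convexity on the row $y=y^{\ast}$ places the entire horizontal interval from $(x,y)$ to $(z,y)$ in $S$, in particular $(x+1,y)\in S$, which disposes of the case $(x+1,y)\notin S$. For the two remaining cases I would invoke Lemma \ref{Square_P}: $(x,y)\cap(z,y-1)=(x,y-1)$ and $(x+1,y)\cap(z,y-1)=(x+1,y-1)$, both forced into $S$ since $x+1\le z$, each contradicting the corresponding non-membership.

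The only genuine subtlety is the three-way case split from the definition of $\mathcal{B}_{lower}$, together with the fact that the three disjuncts are not handled by a single tool: the purely vertical (resp. horizontal) neighbour condition is killed by orthogonal convexity, whereas the two diagonal conditions require a closure operation. The one place to be careful is the $\max$/$\min$ bookkeeping — for instance verifying that $\max(y-1,z)=y-1$ exactly because $z<y$, and that $\min(x+1,z)=x+1$ exactly because $z\ge x+1$ — but this is routine once the case analysis is laid out as above.
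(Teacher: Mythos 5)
Your proof is correct, but it follows a genuinely different route from the paper's. The paper argues as follows: assuming (for part (i)) that $A=(x,z)\in S\setminus\mathcal{B}_{lower}$ with $z<y$, it extracts $(x+1,z)\in S$ from the definition of $\mathcal{B}_{lower}$, then takes the \emph{rightmost} point $(x^{\ast},z)\in S$ of that row (which exists by finiteness); since $(x^{\ast}+1,z)\notin S$, this point lies in $\mathcal{B}_{lower}$, and being non-comparable with $B_{k}$ it contradicts Lemma \ref{Increase}, the monotonicity of the lower boundary proved just before. Part (ii) is dismissed as "proved similarly." You instead never invoke Lemma \ref{Increase}: you split on which of the three neighbours $(x+1,y)$, $(x,y-1)$, $(x+1,y-1)$ of $B_{k}$ is missing, and kill the vertical (resp.\ horizontal) disjunct by orthogonal convexity and the two diagonal-flavoured disjuncts by closure under union in part (i) and by closure under intersection (Lemma \ref{Square_P}) in part (ii). Both arguments are sound, and your $\max$/$\min$ computations check out, including the implicit use of $(x,y-1)\in S$ (from orthogonal convexity) to form the union $(x,y-1)\cup(x+1,z)$, and of $(x+1,y)\in S$ to form the intersection $(x+1,y)\cap(z,y-1)$. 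What the paper's route buys is brevity: one sentence, leaning on the monotone ordering of $\mathcal{B}_{lower}$ already in hand. What your route buys is logical independence and symmetry: Lemma \ref{Inside} becomes provable from material established \emph{before} Lemma \ref{Increase} (orthogonal convexity, closure under union, Lemma \ref{Square_P}), it does not need the extremal "last right point" device (hence no appeal to finiteness of rows), and it makes the union/intersection duality between parts (i) and (ii) explicit rather than deferring part (ii) to an analogy.
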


\begin{proof}
Suppose the opposite, i.e., there is $A=(x,z)\in S$ such that $A\notin
\mathcal{B}_{lower}$ and $z<y$. Then the point $(x+1,z)\in S$, so there is the
last right point $(x^{\ast},z)\in S$, such that $x^{\ast}>x$. Hence, the point
$(x^{\ast},z)$ belongs to $\mathcal{B}_{lower}$, contradicting Lemma
\ref{Increase}. The Property (\textit{ii}) is proved similarly.
\end{proof}

\begin{lemma}
The lower boundary is an $N_{4}$-path between $(0,0)$ and $(x_{max},y_{max})$.
\end{lemma}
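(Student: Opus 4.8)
The plan is to show that the lexicographically ordered lower boundary points form a path where consecutive points are $N_4$-neighbors. We already know from Lemma~\ref{Increase} that $B_0=(0,0),B_1,\dots,B_k=(x_{max},y_{max})$ is a monotone increasing path, so for each consecutive pair we have $B_{i+1}\supset B_i$, meaning $x_{B_{i+1}}\geq x_{B_i}$ and $y_{B_{i+1}}\geq y_{B_i}$ with at least one strict. The only thing left to prove is that this increase is by exactly one step at a time: that is, $|x_{B_{i+1}}-x_{B_i}|+|y_{B_{i+1}}-y_{B_i}|=1$, so that $B_{i+1}\in N_4(B_i)$ rather than being a genuine diagonal or longer jump.

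First I would argue by contradiction, supposing two consecutive lower-boundary points $B_i=(x,y)$ and $B_{i+1}=(x',y')$ are \emph{not} $N_4$-neighbors. By monotonicity and the lexicographic ordering, the gap must fall into one of a few cases: either a purely horizontal jump ($y'=y$, $x'>x+1$), a purely vertical jump ($x'=x$, $y'>y+1$), or a diagonal step ($x'=x+1$, $y'=y+1$, or a larger combined jump). In the horizontal case, $(x+1,y)\in S$ must hold by orthogonal convexity (since both $B_i$ and $B_{i+1}$ lie on the line $y=y'$ and $S$ meets that line in a single interval), and I would show $(x+1,y)$ itself lies on the lower boundary, contradicting that $B_{i+1}$ is the lexicographic successor of $B_i$. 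The vertical case is symmetric, using orthogonal convexity along the column $x=x'$.

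The genuinely delicate case is the diagonal jump $B_i=(x,y)$, $B_{i+1}=(x+1,y+1)$, where neither $(x+1,y)$ nor $(x,y+1)$ is forced to be skipped for the same reason. Here I would use the definition of $\mathcal{B}_{lower}$ together with Lemma~\ref{Inside}. Since $B_i=(x,y)\in\mathcal{B}_{lower}$, at least one of $(x+1,y)$, $(x,y-1)$, $(x+1,y-1)$ is outside $S$; and since $B_i$ is \emph{not} the upper-right corner, accessibility and closure under union guarantee that the point $(x+1,y+1)$ being in $S$ pulls in one of $(x+1,y)$ or $(x,y+1)$ into $S$. I would then show that whichever of these two lies in $S$ must \emph{itself} be a lower-boundary point lexicographically strictly between $B_i$ and $B_{i+1}$, again contradicting consecutiveness. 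The main obstacle is handling this diagonal case cleanly: one must verify that the intermediate point genuinely satisfies the lower-boundary defining condition (one of its three relevant neighbors is missing from $S$), which is where Lemma~\ref{Inside} is essential, since it controls exactly which points on a given row or column can fail to be lower-boundary points.

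Finally, having ruled out every non-$N_4$ jump between consecutive points, I would conclude that each step satisfies $B_{i+1}\in N_4(B_i)$, and since Lemma~\ref{Increase} already established the sequence is a monotone increasing path from $(0,0)$ to $(x_{max},y_{max})$, it is therefore a monotone increasing $N_4$-path, completing the proof.
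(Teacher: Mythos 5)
Your proposal is correct and is essentially the paper's own argument: the paper likewise works with the lexicographically ordered $\mathcal{B}_{lower}$, uses the chain property to place $(x_{B_i}+1,y_{B_i})$ or $(x_{B_i},y_{B_i}+1)$ in $S$, invokes Lemma~\ref{Inside} (or the defining condition of $\mathcal{B}_{lower}$) to certify that the intermediate point is itself a lower-boundary point, and concludes from consecutiveness in the lexicographic order. The only caveat is that your vertical case is not literally symmetric to the horizontal one --- there Lemma~\ref{Inside}(i) must be applied to $B_{i+1}$ rather than to $B_i$ to show $(x,y+1)\in\mathcal{B}_{lower}$ --- but this is a one-line adjustment, and the paper's own treatment of that case is equally terse.
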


\begin{proof}
Prove that $|B_{i+1}-B_{i}|=1$ for every $1\leq i<k$. If $x_{B_{i}}%
=x_{B_{i+1}}$. then from the chain property and Lemma \ref{Inside} it
immediately follows that $y_{B_{i+1}}=y_{B_{i}}+1$.

If $x_{B_{i}}<x_{B_{i+1}}$, then the chain property implies that either
$(x_{B_{i}}+1,y_{B_{i}})\in S$ or $(x_{B_{i}},y_{B_{i}}+1)\in S$. Since the
point $(x_{B_{i}},y_{B_{i}}+1)\notin\mathcal{B}_{lower}$, we can conclude that
$(x_{B_{i}}+1,y_{B_{i}})\in S$ in any case. Now Lemma \ref{Increase} implies
that this point belongs to $\mathcal{B}_{lower}$, i.e., $B_{i+1}=(x_{B_{i}%
}+1,y_{B_{i}}).$
\end{proof}

The upper boundary case is validated in the same way.

Thus an antimatroidal point set is an orthogonally convex $N_{4}$-connected
set bounded by two monotone increasing $N_{4}$-paths.

The following lemma is the "only-if" part of Theorem
\ref{Antimatroid_Set Theorem}.

\begin{lemma}
An orthogonally convex $N_{4}$-connected set $S$ that is bounded by two
monotone increasing $N_{4}$-paths between $(0,0)$ and $(x_{max},y_{max})$ is
an antimatroidal point set.
\end{lemma}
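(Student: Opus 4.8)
The plan is to verify that a set $S$ meeting the hypotheses satisfies the two defining conditions $(A1)$ and $(A2)$ of an antimatroidal point set. First I would convert the geometric hypotheses into a coordinate description of $S$. Let $L$ and $U$ denote the lower and upper bounding paths. Orthogonal convexity makes $S\cap\{x=c\}$ a single vertical interval, whose bottom point lies on $L$ and whose top point lies on $U$; write this interval as $f(c)\le y\le g(c)$. Since $L$ and $U$ consist only of east and north steps, both coordinates are non-decreasing as one traverses either path, so $f$ and $g$ are non-decreasing functions of $x$. Reading $S$ by horizontal lines instead yields the symmetric description $S\cap\{y=c\}=\{\,(x,c):p(c)\le x\le q(c)\,\}$ with $p,q$ non-decreasing in $y$. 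Turning the phrase ``bounded by two monotone increasing $N_4$-paths'' into this representation, and checking that the interval endpoints are exactly governed by $L$ and $U$, is the step where the hypotheses are actually consumed; once it is in place the rest is short.

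For $(A1)$, let $A=(x_A,y_A)\in S$ with $A\ne(0,0)$. If $y_A>f(x_A)$, then $A$ is not the bottom of its column and $(x_A,y_A-1)\in S$. Otherwise $A=(x_A,f(x_A))$ lies on $L$; as $A$ differs from the initial vertex $(0,0)$ of the monotone path $L$, its $L$-predecessor is an $N_4$-neighbour with strictly smaller coordinates, hence equal to $(x_A-1,y_A)$ or $(x_A,y_A-1)$, and it belongs to $L\subseteq S$. Either way $(A1)$ holds. For the second and third cases of $(A2)$ I would argue purely by monotonicity. In the second case $x_A\le x_B$ and $y_A\ge y_B$, and $A\nsubseteq B$ forces $y_A>y_B$; then $g(x_B)\ge g(x_A)\ge y_A>y_B$ gives $(x_B,y_B+1)\in S$. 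The third case is symmetric, using $q(y_B)\ge q(y_A)\ge x_A>x_B$ to produce $(x_B+1,y_B)\in S$.

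The first case of $(A2)$ is the one requiring real work, since there $x_A\ge x_B$ and $y_A\ge y_B$ place $A$ weakly to the north-east of $B$, and monotonicity of $f,g,p,q$ no longer points in the helpful direction. Here I would instead prove the auxiliary fact that every point $B\in S$ other than $(x_{max},y_{max})$ has an east or north neighbour in $S$: if $g(x_B)>y_B$ the north neighbour lies in $S$, and otherwise $B=(x_B,g(x_B))$ is the top of its column, hence a vertex of the upper path $U$; being different from the terminal vertex $(x_{max},y_{max})$, its successor along $U$ is an east or north neighbour of $B$ lying in $U\subseteq S$. To finish the first case, note that $A\ne B$ together with $x_A\ge x_B$, $y_A\ge y_B$ rules out $B=(x_{max},y_{max})$ (which would force $A=B$), so the auxiliary fact applies and supplies the required neighbour. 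The main obstacle is thus twofold: extracting the monotone two-staircase representation cleanly from the hypotheses, and handling the north-east case of $(A2)$ through the boundary path rather than through monotonicity.
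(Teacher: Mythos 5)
Your proposal is correct and follows essentially the same route as the paper's proof: a direct verification of $(A1)$ and $(A2)$ in which the north-east case and accessibility are handled via the predecessor/successor of a boundary point along the monotone bounding paths, and the two mixed cases of $(A2)$ follow from monotonicity of the column/row extremes (the paper phrases this as ``monotonicity of the boundary,'' you as monotonicity of $f,g,p,q$). Your write-up is somewhat more explicit about how the hypotheses yield the monotone interval representation, but the underlying argument is the same.
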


\begin{proof}
By Definition \ref{Def} we have to check the two properties (A1) and (A2):

(A1) Let $A=(x,y)\in S$. If $A$ is an interior point in $S$ then $(x-1,y)\in
S$ and $(x,y-1)\in S$. If $A$ is a boundary point, then the previous point on
the boundary ($(x,y-1)$ or $(x-1,y)$) belongs to $S$.

(A2) Let $A\nsubseteq B\in S$. Consider two cases:

(i) $x_{a}\geq x_{b}$ and $y_{a}\geq y_{b}$. If $B$ is an interior point in
$S$ then $(x_{b}+1,y_{b})\in S$ and $(x_{b},y_{b}+1)\in S$. If $B$ is a
boundary point, then the next point on the boundary ($(x_{b},y_{b}+1)$ or
$(x_{b}+1,y_{b})$) belongs to $S$.

(ii)$x_{a}\leq x_{b}$ and $y_{a}\geq y_{b}$. We have to prove that
$(x_{b},y_{b}+1)\in S$. Suppose the opposite. Then the point $B$ is an upper
boundary point. Since $y_{a}\geq y_{b}$ there exists an upper boundary point
$(x_{a},y)$ with $y\geq y_{b}$ that contradicts the monotonicity of the boundary.
\end{proof}

\begin{corollary}
Any antimatroidal point set $S$ may be represented by its boundary in the
following form:
\[
S=\mathcal{B}_{lower}\vee\mathcal{B}_{upper}=\{X\cup Y:X\in\mathcal{B}%
_{lower},Y\in\mathcal{B}_{upper}\}
\]

\end{corollary}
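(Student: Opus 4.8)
The plan is to establish the two inclusions of the claimed equality separately. The inclusion $\mathcal{B}_{lower}\vee\mathcal{B}_{upper}\subseteq S$ is the easy direction and follows with no further work from the antimatroid structure: by definition both $\mathcal{B}_{lower}\subseteq S$ and $\mathcal{B}_{upper}\subseteq S$, and for two multiset points the join is $X\cup Y=(\max(x_X,x_Y),\max(y_X,y_Y))$, so since $S$ is closed under union (Proposition, part $(ii)$) every such join of two points of $S$ again lies in $S$. Thus the whole content of the corollary sits in the reverse inclusion.

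For $S\subseteq\mathcal{B}_{lower}\vee\mathcal{B}_{upper}$ I would fix an arbitrary $P=(x_P,y_P)\in S$ and exhibit one lower-boundary point and one upper-boundary point whose join is exactly $P$. First I would set $X=(x_P,y_{\min})$, the lowest point of $S$ on the vertical line $x=x_P$, where $y_{\min}=\min\{y:(x_P,y)\in S\}$; this minimum exists because $S$ is finite and meets this line at least in $P$. By construction $(x_P,y_{\min}-1)\notin S$, so the defining condition of $\mathcal{B}_{lower}$ (namely $(x,y-1)\notin S$) is satisfied and $X\in\mathcal{B}_{lower}$. Symmetrically I would set $Y=(x_{\min},y_P)$, the leftmost point of $S$ on the horizontal line $y=y_P$, with $x_{\min}=\min\{x:(x,y_P)\in S\}$; then $(x_{\min}-1,y_P)\notin S$, which is the defining condition for $\mathcal{B}_{upper}$, so $Y\in\mathcal{B}_{upper}$. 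It remains only to recover $P$ as a join: since $y_{\min}\le y_P$ and $x_{\min}\le x_P$, we get $X\cup Y=(\max(x_P,x_{\min}),\max(y_{\min},y_P))=(x_P,y_P)=P$, which completes the inclusion.

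I do not expect a genuine obstacle in this corollary, since it is essentially a bookkeeping consequence of the characterization already proved. The only point requiring a little care is confirming that the two extremal points $X$ and $Y$ really fall on the lower and upper boundaries, respectively; but this is immediate from the definitions of $\mathcal{B}_{lower}$ and $\mathcal{B}_{upper}$, where a single missing neighbor suffices to certify membership. Orthogonal convexity is what guarantees that each axis-parallel line meets $S$ in a single interval, so that $X$ and $Y$ are the two endpoints of that interval, but for the mere well-definedness of $X$ and $Y$ only the finiteness of $S$ is actually used.
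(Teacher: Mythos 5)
Your proof is correct. The paper gives no explicit proof of this corollary --- it is stated as an immediate consequence of Theorem \ref{Antimatroid_Set Theorem} and the lemmas leading up to it --- so there is no official argument to compare line by line, but your construction is exactly the one that the paper's machinery points to: Lemma \ref{Inside} identifies lower-boundary points as the lowest points of their columns (and rightmost of their rows), the remark after Algorithm \ref{Alg} makes the dual identification for $\mathcal{B}_{upper}$, and your decomposition $P=(x_P,y_{\min})\cup(x_{\min},y_P)$ instantiates precisely that picture. The genuine difference is economy of hypotheses. You certify $(x_P,y_{\min})\in\mathcal{B}_{lower}$ and $(x_{\min},y_P)\in\mathcal{B}_{upper}$ directly from a single missing-neighbour disjunct in their definitions, and the two inclusions then need only finiteness of $S$ and closure under union; so your argument actually proves the set identity for an arbitrary finite union-closed point set, without invoking orthogonal convexity, Lemma \ref{Square_P}, or the monotone-path structure at all. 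What the paper's longer route buys is the interpretation that gives the corollary its significance: the conclusion that the convex dimension of a two-dimensional poly-antimatroid equals $2$ requires knowing that $\mathcal{B}_{lower}$ and $\mathcal{B}_{upper}$ are themselves maximal chains, which is the content of Lemma \ref{Increase} and the subsequent $N_4$-path lemma, not of the identity itself. So your proof is a valid, and in fact slightly more general, substitute for the corollary as literally stated, with the chain property of the two boundaries still supplied by those earlier lemmas.
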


This result shows that the \textit{convex dimension} of a two-dimensional
poly-antimatroid is two.

\begin{definition}
Convex dimension \cite{Greedoids} $c\dim(S)$ of any antimatroid $S$ is the
minimum number of maximal chains
\[
\emptyset=X_{0}\subset X_{1}\subset...\subset X_{k}=X_{\max}\text{ with }%
X_{i}=X_{i-1}\cup x_{i\text{ }}%
\]
whose union gives the antimatroid $S$.
\end{definition}

The boundary of any poly-antimatroid may be found by using the following
algorithm. Starting with the maximum point $(x_{\max},y_{\max})$ it follows
the upper boundary down to point $(0,0)$ in the first pass, and it follows the
lower boundary in the second pass.

\begin{algorithm}
\label{Alg}\textbf{Upper} \textbf{boundary tracing algorithm}

1. $i:=0$; $x:=x_{\max}$; $y:=y_{\max}$;

2. $B_{i}:=(x,y)$

3. do

\ \ \ \ \ \ \ \ \ \ \ 3.1\ if $(x-1,y)\in S$ then $x:=x-1$, else $y:=y-1$;

\ \ \ \ \ \ \ \ \ \ \ 3.2 $\ i:=i+1$;

\ \ \ \ \ \ \ \ \ \ \ 3.3 \ $B_{i}:=(x,y)$;

\ \ \ \ until $B_{i}=(0,0)$

4. Return the sequence $\mathcal{B}=B_{i},B_{i-1},...,B_{0}$
\end{algorithm}

It is easy to check that Algorithm \ref{Alg} returns a monotone increasing
$N_{4}$-path that only passes over the upper boundary points from
$\mathcal{B}_{upper}$ (for each $i$ point $B_{i}=(x^{i},y^{i})$ has the
maximum $y-$coordinate, i.e., $y^{i}=\underset{y}{\arg\max}\{(x^{i},y)\in
S\}$). Hence, the Upper boundary tracing algorithm returns the upper boundary
of poly-antimatroids.

The Lower boundary tracing algorithm differs from Algorithm \ref{Alg} on
search order only: $(y,x)$ instead of $(x,y)$. Step 3.1 of Algorithm \ref{Alg}
will be as follows:%
\[
3.1\text{ if }(x,y-1)\in S\text{ then }y:=y-1\text{,else }x:=x-1\text{;}%
\]

Correspondingly, the Lower boundary tracing algorithm returns the lower
boundary of poly-antimatroids.

It turned out that a two-dimensional poly-antimatroid known in this section as
an antimatroidal point set is equivalent to special cases of polyominoes or
staircase polygons and is defined by a pair of monotone paths made only with
north and east steps. In the next section we research the path structure of
three-dimensional antimatroidal point sets.

\section{Three-dimensional antimatroidal point set}

In this section we consider a particular case of three-dimensional
antimatroidal point sets.

\begin{definition}
A\ finite non-empty set $C$ of points in digital $3D$ space is a digital
cuboid if
\[
C=\{(x,y,z)\in{\mathbb{Z}}^{3}:x_{\min}\leq x\leq x_{\max}\wedge y_{\min}\leq
y\leq y_{\max}\wedge z_{\min}\leq z\leq z_{\max}\}\text{ and }|C|>1
\]

\end{definition}

A digital cuboid is specified by the coordinates of opposite corners.

Consider the following construction of $n$ cuboids:

\begin{definition}
\label{staircase}The sequence of $n$ cuboids $C_{1},C_{2},...,C_{n}$ is called
regular if \

(a) $x_{\min}^{0}=y_{\min}^{0}=z_{\min}^{0}=0$

(b) $x_{\min}^{i}\leq x_{\min}^{i+1}\wedge y_{\min}^{i}\leq y_{\min}%
^{i+1}\wedge z_{\min}^{i}\leq z_{\min}^{i+1}$

\ \ \ \ and at least one of the inequality is strong \ for each $1\leq i\leq
n-1$

(c) $x_{\min}^{i+1}\leq x_{\max}^{i}\wedge y_{\min}^{i+1}\leq y_{\max}%
^{i}\wedge z_{\min}^{i+1}\leq z_{\max}^{i}$ for each $1\leq i\leq n-1$

(d) $x_{\max}^{i}\leq x_{\max}^{i+1}\wedge y_{\max}^{i}\leq y_{\max}%
^{i+1}\wedge z_{\max}^{i}\leq z_{\max}^{i+1}$ \

\ \ \ \ and at least one of the inequality is strong \ for each $1\leq i\leq
n-1$
\end{definition}

\begin{definition}
The union of elements of regular sequence $\mathcal{C}=C_{1}\cup C_{2}%
\cup...\cup C_{n}$  is called an $n$-step staircase.
\end{definition}

An example of a $2$-step staircase is depicted in Figure $\ref{int_fig3}$.

\begin{figure}[ptbh]
\setlength{\unitlength}{1cm}
\par
\begin{picture}(8,4.5)(-4.5,-0.5)\thicklines
\multiput(0,0.5)(1.0,0){2}{\circle*{0.25}}
\multiput(0,1.5)(1.0,0){2}{\circle*{0.25}}
\multiput(0,2.5)(1.0,0){2}{\circle*{0.25}}
\put(0.7,3.2){\circle*{0.25}}
\multiput(1.4,3.9)(1.0,0){4}{\circle*{0.25}}
\multiput(1.3,0)(1.0,0){3}{\circle*{0.25}}
\multiput(1.3,1)(2,0){2}{\circle*{0.25}}
\multiput(1.3,2)(1.0,0){3}{\circle*{0.25}}
\multiput(2.0,2.7)(2,0){2}{\circle*{0.25}}
\multiput(2.7,3.4)(1.0,0){3}{\circle*{0.25}}
\put(4.0,0.7){\circle*{0.25}}
\multiput(4.7,1.4)(0,1.0){2}{\circle*{0.25}}
\put(0,0.5){\line(1,0){1.3}} \put(0,1.5){\line(1,0){1.3}}
\put(0,0.5){\line(0,1){2.0}} \put(1.3,0){\line(0,1){2.0}}
\put(1.3,0){\line(1,0){2.0}}\put(1.3,2){\line(1,0){2.0}}
\put(3.3,0){\line(0,1){2.0}}\put(4.7,1.4){\line(0,1){2.0}}
\put(2.7,3.4){\line(1,0){2.0}} \put(1.4,3.9){\line(1,0){3.0}}
\put(0,2.5){\line(1,1){1.4}}\put(1.3,2.0){\line(1,1){1.4}}
\put(3.3,2.0){\line(1,1){1.4}}\put(3.3,0.0){\line(1,1){1.4}}
\put(0,2.5){\line(1,0){1.8}} \put(4.4,3.4){\line(0,1){0.5}}
\put(3.9,3.4){\line(1,1){0.5}}
\end{picture}
\caption{$2$-step staircase $C$.}%
\label{int_fig3}%
\end{figure}
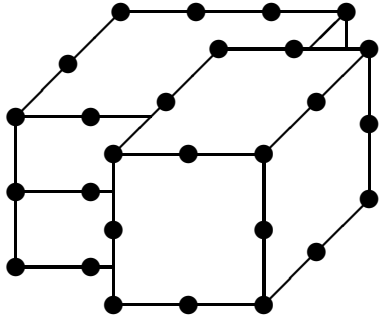

Each point $A=(x_{A},y_{A},z_{A})$ in the digital space ${\mathbb{Z}}^{3}$ may
be considered as a multiset $A$ over $\{x,y,z\}$, where $x_{A}$ is the number
of repetitions of an element $x$, and $y_{A}$ is the number of repetitions of
an element $y$, and $z_{A}$ is the number of repetitions of an element $z$ in
multiset $A$. Then a $1$-step staircase (cuboid) is a poly-antimatroid, since
the family of points is accessible and closed under union ($(x^{1},y^{1}%
,z^{1}),(x^{2},y^{2},z^{2})\in C\Rightarrow(x^{1},y^{1},z^{1})\cup(x^{2}%
,y^{2},z^{2})=(\max(x^{1},x^{2}),\max(y^{1},y^{2}),\max(z^{1},z^{2}))\in C$).

\begin{lemma}
An $n$-step staircase $\mathcal{C}$ is a poly-antimatroid.
\end{lemma}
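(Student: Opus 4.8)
The plan is to show that the union $\mathcal{C}=C_1\cup\dots\cup C_n$ of a regular sequence of cuboids satisfies the two antimatroid axioms $(A1)$ and $(A2)$, exactly as phrased in Definition~\ref{Def} but lifted to three coordinates. First I would verify accessibility $(A1)$: given a nonzero point $A=(x_A,y_A,z_A)\in\mathcal{C}$, it lies in some cuboid $C_i$, and I must produce a neighbor $A-e$ (decrementing one coordinate) that still belongs to $\mathcal{C}$. Inside a single cuboid this is immediate whenever the corresponding coordinate exceeds its local minimum $x_{\min}^i$, $y_{\min}^i$ or $z_{\min}^i$; the delicate situation is when $A$ sits at a local corner where some coordinate equals the minimum of $C_i$. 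Here I would exploit condition (c) together with the telescoping chain $x_{\min}^0=y_{\min}^0=z_{\min}^0=0$ from condition (a): a point stuck at the lower corner of $C_i$ with $i>1$ must already lie in the overlap region governed by $x_{\min}^{i}\le x_{\max}^{i-1}$, so it actually belongs to $C_{i-1}$ as well, and in $C_{i-1}$ at least one coordinate is strictly above the corresponding minimum (because of the strict-inequality clause in (b)), giving the required decrement. Iterating, accessibility reduces to accessibility of the bottom cuboid $C_1$, whose minimum is the origin.

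Next I would establish closure under union, which by Proposition (the accessible-plus-union characterization) is equivalent to $(A2)$ and is cleaner to check directly. Take $A\in C_i$ and $B\in C_j$ with, say, $i\le j$, and set $A\cup B=(\max(x_A,x_B),\max(y_A,y_B),\max(z_A,z_B))$. The claim is $A\cup B\in C_j$. Each coordinate of $A\cup B$ is at most $\max$ of the two relevant cuboid maxima, and by the monotonicity of maxima in condition (d) we have $x_{\max}^i\le x_{\max}^j$ and likewise for $y,z$, so every coordinate of $A\cup B$ stays below the upper bounds of $C_j$. For the lower bounds, each coordinate of $A\cup B$ is at least the corresponding coordinate of $B\in C_j$, hence at least $x_{\min}^j$, $y_{\min}^j$, $z_{\min}^j$. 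Therefore $A\cup B$ satisfies all six defining inequalities of the cuboid $C_j$, so it lies in $\mathcal{C}$. Having shown $\mathcal{C}$ is accessible and union-closed, Proposition (the cited characterization) yields that $\mathcal{C}$ is an antimatroid, i.e.\ a poly-antimatroid.

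The main obstacle is the accessibility step at the inter-cuboid corners, since a naive attempt to decrement a coordinate can leave $C_i$ without entering any neighboring cuboid. The crux is to use the overlap condition (c) to relocate such a corner point into the previous cuboid $C_{i-1}$, where a strict inequality in (b) guarantees room to step down; one should handle carefully the subcase where \emph{all three} of the point's coordinates simultaneously equal the minima of $C_i$, verifying via (a)--(c) that this forces membership in $C_{i-1}$ (or that $i=1$ and the point is the origin, which is excluded). Once this containment lemma is in place, both axioms follow smoothly, and the proof of the lemma is complete.
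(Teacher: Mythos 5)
Your proposal is correct and takes essentially the same approach as the paper: accessibility is proved by the same case analysis on whether $A$ is the minimum corner of its cuboid $C_i$, using conditions (b), (c) and the strictness clause to relocate the corner point into $C_{i-1}$ where a coordinate can be decremented, and closure under union is verified directly by showing $A\cup B\in C_j$ via condition (d). Your handling of the $i=1$ corner case and the appeal to the accessible-plus-union-closed characterization are only explicit versions of what the paper leaves implicit.
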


\begin{proof}
Consider some point $A\neq(0,0,0)\in\mathcal{C}$. Then there exists $i$ such
that $A\in C_{i}$. If $A\neq(x_{\min}^{i},y_{\min}^{i},z_{\min}^{i})$, then,
without lost of generality, $x_{\min}^{i}<x_{A}$, and so $(x_{A}-1,y_{A}%
,z_{A})\in C_{i}\subseteq\mathcal{C}$. If $A=(x_{\min}^{i},y_{\min}%
^{i},z_{\min}^{i})$, then, from Definition \ref{staircase} (b,c) it follows
that $A\in C_{i-1}$and $A\neq(x_{\min}^{i-1},y_{\min}^{i-1},z_{\min}^{i-1})$.
Hence, $(x_{A}-1,y_{A},z_{A})\in C_{i-1}\subseteq\mathcal{C}$ or $(x_{A}%
,y_{A}-1,z_{A})\in C_{i-1}\subseteq\mathcal{C}$ or $(x_{A},y_{A},z_{A}-1)\in
C_{i-1}\subseteq\mathcal{C}$. Thus, an $n$-step staircase $\mathcal{C}$ is accessible.

To prove that $\mathcal{C}$ is closed under union consider two points
$A,B\in\mathcal{C}$. Let $A\in C_{i}$, $B\in C_{j}$, and $i\leq j$. From
Definition \ref{staircase} (b) it follows that $x_{\min}^{j}\leq\max
(x_{A},x_{B})$. From Definition \ref{staircase} (d), $\max(x_{A},x_{B})\leq
x_{\max}^{j}$. So, $A\cup B=(\max(x_{A},x_{B}),\max(y_{A},y_{B}))\in C_{j}$,
i.e., $A\cup B\in\mathcal{C}.$
\end{proof}

Our goal is to find the minimum number of maximal chains which describe an
$n$-step staircase.

First, consider a cuboid $C$ given by two points $(x_{\min},y_{\min},z_{\min
})$ and $(x_{\max},y_{\max},z_{\max})$. We will denote by $P_{X}$ the maximal
chain connecting the points $(x_{\min},y_{\min},z_{\min})$ and $(x_{\max
},y_{\min},z_{\min})$, i.e.,
\[
P_{X}=(x_{\min},y_{\min},z_{\min}),(x_{\min}+1,y_{\min},z_{\min}%
),...,(x_{\max},y_{\min},z_{\min})\text{.}%
\]
$P_{Y}$ and $P_{Z}$ are defined correspondingly. It is easy to see that
\[
C=P_{X}\vee P_{Y}\vee P_{Z}=\{A_{1}\cup A_{2}\cup A_{3}:A_{1}\in P_{X}%
,A_{2}\in P_{Y},A_{3}\in P_{Z}\}
\]

So, if we have three maximal chains $B_{1}$,$B_{2}$,$B_{3}$ from the cuboid
$C$, which pass over all the points of $P_{X}$, $P_{Y}$ and $P_{Z}$, then
$C\subseteq B_{1}\vee B_{2}\vee B_{3}$. On the other hand, since $C$ is closed
under union, we have $B_{1}\vee B_{2}\vee B_{3}\subseteq C$. Thus, $B_{1}\vee
B_{2}\vee B_{3}=C$.

Now, consider the three-dimensional version of Algorithm \ref{Alg}
for search order $(x,y,z)$. Let $(x_{\max},y_{\max},z_{\max})$ be
the maximum point of $n$-step staircase $\mathcal{C}$. The
algorithm builds the chain connecting the maximum point and the
point $(0,0,0)$.

\begin{algorithm}
\label{Alg3}XYZ-Boundary tracing algorithm

1. $i:=0$; $x:=x_{\max}$; $y:=y_{\max}$; $z:=z_{\max}$;

2. $B_{i}:=(x,y,z)$

3. do

\ \ \ \ \ \ \ \ \ \ \ 3.1\ if $(x-1,y,z)\in S$ then $x:=x-1$, else if
$(x,y-1,z)\in S$

\ \ \ \ \ \ \ \ \ \ \ \ \ \ \ \ then $y:=y-1$, else $z:=z-1$;

\ \ \ \ \ \ \ \ \ \ \ 3.2 $\ i:=i+1$;

\ \ \ \ \ \ \ \ \ \ \ 3.3 \ $B_{i}:=(x,y,z)$;

\ \ \ \ until $B_{i}=(0,0,0)$

4. Return the sequence $\mathcal{B}=B_{i},B_{i-1},...,B_{0}$
\end{algorithm}

Repeat Algorithm \ref{Alg3} for search order $(y,z,x)$ and for search order
$(z,x,y)$. As a result, we obtain three monotone increasing $N_{6}$-paths
\cite{Digital} from $(0,0,0)$ to $(x_{\max},y_{\max},z_{\max})$, denoted by
$\mathcal{B}_{Z}$, $\mathcal{B}_{X}$ and $\mathcal{B}_{Y}$, respectively. The
length of any of these chains is equal to $(x_{\max}+y_{\max}+z_{\max})$.

\begin{theorem}
\label{3convex}Any $n$-step staircase $\mathcal{C}=\mathcal{B}_{X}%
\vee\mathcal{B}_{Y}\vee\mathcal{B}_{Z}$.
\end{theorem}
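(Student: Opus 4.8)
The goal is to show that the three $N_6$-paths $\mathcal{B}_X, \mathcal{B}_Y, \mathcal{B}_Z$ produced by the three orderings of Algorithm \ref{Alg3} satisfy $\mathcal{C} = \mathcal{B}_X \vee \mathcal{B}_Y \vee \mathcal{B}_Z$. Let me think about what structure I can exploit.

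First, let me understand the inclusion $\mathcal{B}_X \vee \mathcal{B}_Y \vee \mathcal{B}_Z \subseteq \mathcal{C}$ — this is the easy direction. Each $\mathcal{B}_X, \mathcal{B}_Y, \mathcal{B}_Z$ is a chain of points all lying in $\mathcal{C}$ (the algorithm only steps to points in $S=\mathcal{C}$). Since $\mathcal{C}$ is a poly-antimatroid (proven in the preceding lemma), it is closed under union. Therefore any join $A_1 \cup A_2 \cup A_3$ with $A_i \in \mathcal{B}_X, \mathcal{B}_Y, \mathcal{B}_Z$ lies in $\mathcal{C}$.

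Now the hard direction: $\mathcal{C} \subseteq \mathcal{B}_X \vee \mathcal{B}_Y \vee \mathcal{B}_Z$. Let me verify the degenerate case first, then think about the general strategy.

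**Verifying the base case ($n=1$).** For a single cuboid $C$, the earlier discussion already established $C = P_X \vee P_Y \vee P_Z$, where $P_X, P_Y, P_Z$ are the three edge-chains from $(x_{\min},y_{\min},z_{\min})$ to the respective corner. For a cuboid with $(x_{\min},y_{\min},z_{\min}) = (0,0,0)$, the three algorithm-orderings trace exactly these edges (the $(x,y,z)$ ordering decrements $x$ fully, then $y$, then $z$, hitting $P_X$ along the way; similarly for the others). So for $n=1$ the three $\mathcal{B}$-paths contain $P_X, P_Y, P_Z$ and the result holds.

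**General strategy.** Let me take an arbitrary point $A = (a,b,c) \in \mathcal{C}$. I want to exhibit $A_1 \in \mathcal{B}_X$, $A_2 \in \mathcal{B}_Y$, $A_3 \in \mathcal{B}_Z$ with $A_1 \cup A_2 \cup A_3 = A$. Since joins take coordinatewise maxima, I need the three chosen points to have, among their $x$-coordinates, a maximum equal to $a$ and both others $\le a$; similarly for $y$ and $z$. Concretely, it would suffice to find on $\mathcal{B}_X$ a point with $x$-coordinate exactly $a$ and $y,z$-coordinates $\le b,c$; on $\mathcal{B}_Y$ a point with $y$-coordinate exactly $b$ and the others bounded; on $\mathcal{B}_Z$ a point with $z$-coordinate exactly $c$ and the others bounded.

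This is where the main work lies, and where I expect the obstacle. The key property I would try to establish for $\mathcal{B}_X$ (the path from the $(x,y,z)$ ordering, i.e. the one that prioritizes decreasing $x$) is a \emph{greedy-extremality} claim analogous to the remark after Algorithm \ref{Alg}: \emph{for each value $a$ with $0 \le a \le x_{\max}$, the point of $\mathcal{B}_X$ at $x$-coordinate $a$ has the componentwise-smallest $(y,z)$ among all points of $\mathcal{C}$ with $x$-coordinate $a$.} If this holds, then since $A=(a,b,c)\in\mathcal{C}$ witnesses some point at $x$-coordinate $a$, the $\mathcal{B}_X$-point at level $a$ has $y,z \le b,c$, which is exactly what I need for the $x$-witness.

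**The obstacle and how I would attack it.** Proving that greedy-extremality claim is the crux. The subtlety is that the minimal $(y,z)$ for fixed $x$ need not be attained by a single point unless $\mathcal{C}$ has enough convexity in the $(y,z)$-fibers; so I must use the regular-staircase structure of Definition \ref{staircase}. My plan is to argue that for fixed $x=a$, the slice $\mathcal{C} \cap \{x=a\}$ is itself governed by a nested family of rectangles (the slices of the cuboids $C_i$ containing that $x$-value), and that conditions (b)--(d) force these rectangular slices to form a monotone staircase in the $(y,z)$-plane with a unique componentwise-minimal corner. I would then show, by tracking the algorithm's behavior when it is forced off the $x$-direction, that it lands exactly on that minimal corner. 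I would prove this by induction on $n$, using conditions (b) and (c) to guarantee that consecutive cuboids overlap enough for the traced path to transition between them without leaving $\mathcal{C}$, and condition (d) to guarantee monotonicity of the corners. Once the extremality claim is secured for each of the three orderings, combining the three witnesses $A_1, A_2, A_3$ gives $A_1 \cup A_2 \cup A_3 = A$, completing $\mathcal{C} \subseteq \mathcal{B}_X \vee \mathcal{B}_Y \vee \mathcal{B}_Z$ and hence the theorem.
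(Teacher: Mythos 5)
Your overall strategy (exhibit, for every $A=(a,b,c)\in\mathcal{C}$, one witness on each of the three chains whose join is $A$, via a greedy-extremality property of the traced paths) is a genuinely different route from the paper's, and it can be made to work; but as written its central claim is false, because you attach the extremality property to the wrong chain. The path produced by the $(x,y,z)$ search order --- the one that prioritizes decreasing $x$ --- sweeps $x$ downward while keeping $(y,z)$ as large as possible, so at a fixed $x$-level its points have essentially maximal, not minimal, $(y,z)$. Concretely, take the regular $2$-step staircase $C_1=[0,2]^3$, $C_2=[1,3]^3$. The $(x,y,z)$-order path from $(3,3,3)$ is $(3,3,3),(2,3,3),(1,3,3),(1,2,3),(1,1,3),(1,1,2),(0,1,2),(0,0,2),(0,0,1),(0,0,0)$; its only point with $x=3$ is $(3,3,3)$, while the componentwise-minimal point of $\mathcal{C}\cap\{x=3\}$ is $(3,1,1)$, so your extremality claim fails and the $x$-witness for the point $A=(3,1,1)$ cannot be found on that chain. (In the paper's notation the $(x,y,z)$-order path is $\mathcal{B}_Z$, not $\mathcal{B}_X$; what it actually satisfies is minimality of $(x,y)$ at each $z$-level, since steps decrementing $x$ or $y$ stay inside the slice $\{z=c\}$ and $z$ is decremented only when the path is stuck in that slice.) The repair is to swap the roles throughout: the witness for coordinate $x$ must come from the $(y,z,x)$-order path, in which $x$ has the \emph{lowest} priority; in the example above that path is $(3,3,3),(3,2,3),(3,1,3),(3,1,2),(3,1,1),(2,1,1),(2,0,1),(2,0,0),(1,0,0),(0,0,0)$ and indeed contains the slice minima $(3,1,1)$, $(2,0,0)$, $(1,0,0)$, $(0,0,0)$. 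With that correction your slice argument does go through: for fixed $x=a$ the slice is a union of rectangles, consecutive ones overlap by condition (c) of Definition \ref{staircase} and their minimal corners are monotone by condition (b), so the slice has a unique componentwise minimum and from every other slice point some $y$- or $z$-decrement stays in the slice; hence the greedy path leaves each slice exactly at its minimum. (You would also need the small check that when both $y$- and $z$-decrements fail, decrementing $x$ really stays in $\mathcal{C}$, since the algorithm's else-branch does not test membership.)

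For comparison, the paper argues differently: it strengthens the statement to tracing paths started from arbitrary points of the faces $z=z_{\max}$, $x=x_{\max}$, $y=y_{\max}$ (chains $\mathcal{H}_Z$, $\mathcal{H}_X$, $\mathcal{H}_Y$) and inducts on the number $n$ of cuboids: each traced chain is shown to pass through a designated point of $C_{n-1}$, the induction hypothesis then covers $C_1\cup\dots\cup C_{n-1}$, the top cuboid is covered via $C_n=P_X^n\vee P_Y^n\vee P_Z^n$, and the two covers are merged using the identity $\mathcal{H}\vee\mathcal{H}=\mathcal{H}$, valid because each $\mathcal{H}$ is a chain. Your pointwise-witness argument, once the chains are relabelled, avoids strengthening the statement and is more local; but in its present form the key step fails, and the proof of the (corrected) extremality lemma is only sketched rather than carried out.
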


To show this we prove a stronger statement.

Change Algorithm \ref{Alg3} in the following way. Let the XYZ-Boundary tracing
algorithm begin from some point $(x,y,z_{\max})\in\mathcal{C}$ and return
chain $\mathcal{H}_{Z}$; the YZX-Boundary tracing algorithm begins from some
point $(x_{\max},y,z)\in\mathcal{C}$ and returns chain $\mathcal{H}_{X}$; and
the ZXY-Boundary tracing algorithm begins from some point $(x,y_{\max}%
,z)\in\mathcal{C}$ and returns chain $\mathcal{H}_{Y}$.

\begin{lemma}
Any $n$-step staircase $\mathcal{C}=\mathcal{H}_{X}\vee\mathcal{H}_{Y}%
\vee\mathcal{H}_{Z}$.
\end{lemma}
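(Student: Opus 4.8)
The plan is to show the two inclusions $\mathcal{H}_X\vee\mathcal{H}_Y\vee\mathcal{H}_Z\subseteq\mathcal{C}$ and $\mathcal{C}\subseteq\mathcal{H}_X\vee\mathcal{H}_Y\vee\mathcal{H}_Z$ separately. The first inclusion is the easy half: each of the three chains is produced by a tracing algorithm that only moves from a point of $\mathcal{C}$ to an adjacent point of $\mathcal{C}$ (step~3.1 decrements a coordinate only when the resulting point lies in $S=\mathcal{C}$), so every point on each chain belongs to $\mathcal{C}$. Since $\mathcal{C}$ is a poly-antimatroid, it is closed under union, and therefore any union $A_1\cup A_2\cup A_3$ with $A_i$ drawn from the three chains again lies in $\mathcal{C}$. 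This gives $\mathcal{H}_X\vee\mathcal{H}_Y\vee\mathcal{H}_Z\subseteq\mathcal{C}$ with no further work.

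**The substance** is the reverse inclusion: every point $A=(x_A,y_A,z_A)\in\mathcal{C}$ must be expressible as $A_1\cup A_2\cup A_3$ for suitable $A_i$ on the respective chains. I would fix $A$, locate a cuboid $C_i$ with $A\in C_i$, and then exploit the coordinatewise-max structure of union together with the monotonicity of the three traced paths. Concretely, I would argue that the chain $\mathcal{H}_X$, which runs from $(0,0,0)$ to the fixed starting point with $x$-coordinate $x_{\max}$ and prioritizes the $x$-direction, must pass through a point whose $x$-coordinate equals $x_A$; taking the point $A_X\in\mathcal{H}_X$ at the moment the running $x$ first reaches $x_A$ (or analogously for the other two chains) gives a point with $x$-coordinate $x_A$ and the other two coordinates no larger than $y_A,z_A$. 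Doing this symmetrically for $\mathcal{H}_Y$ (yielding $A_Y$ with $y$-coordinate $y_A$) and $\mathcal{H}_Z$ (yielding $A_Z$ with $z$-coordinate $z_A$), the union $A_X\cup A_Y\cup A_Z$ would then have all three coordinates equal to $x_A,y_A,z_A$, i.e.\ equal to $A$ itself, provided the off-coordinates never overshoot.

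The **main obstacle** is exactly controlling those off-coordinates: I must verify that when $\mathcal{H}_X$ reaches $x$-coordinate $x_A$, its $y$- and $z$-coordinates do not exceed $y_A$ and $z_A$, and symmetrically for the other two chains. This is where the regularity conditions of Definition~\ref{staircase} and the orthogonal-convexity inherited from the two-dimensional theory enter. I expect the cleanest route is to reduce to the single-cuboid case: for a cuboid $C$ one has $C=P_X\vee P_Y\vee P_Z$ (established in the excerpt), and the three tracing algorithms, when restricted to the interval of indices during which they traverse $C_i$, sweep out chains covering $P_X$, $P_Y$, $P_Z$ of that cuboid. The delicate point is the hand-off between consecutive cuboids $C_i$ and $C_{i+1}$, where the paths cross the overlap region guaranteed by condition~(c); I would show that because each path decrements its priority coordinate first, it stays pinned to the boundary facet of the current cuboid until forced to descend, so the off-coordinates are bounded by the corresponding $\max$-coordinate of $C_i$ and hence by those of $A$. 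Once this monotone-sweep property is nailed down for a single cuboid and shown to be preserved across the overlap, the union computation $A_X\cup A_Y\cup A_Z=A$ follows immediately, completing $\mathcal{C}\subseteq\mathcal{H}_X\vee\mathcal{H}_Y\vee\mathcal{H}_Z$ and hence the lemma.
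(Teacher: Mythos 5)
Your first inclusion, $\mathcal{H}_{X}\vee\mathcal{H}_{Y}\vee\mathcal{H}_{Z}\subseteq\mathcal{C}$, is correct and coincides with the paper's own closing observation, and your first-hitting-point strategy for the hard inclusion correctly isolates the crux: the first point $A_{X}$ of $\mathcal{H}_{X}$ with $x$-coordinate $x_{A}$ must satisfy $y_{A_{X}}\leq y_{A}$ and $z_{A_{X}}\leq z_{A}$ (symmetrically for $A_{Y}$, $A_{Z}$). However, both claims you offer in support of this crux break down. First, it is false that each tracing algorithm, while traversing a cuboid $C_{i}$, covers that cuboid's paths $P_{X}^{i},P_{Y}^{i},P_{Z}^{i}$; that statement is exactly the paper's base case $n=1$, and it fails for the other cuboids of a longer staircase. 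Take the regular sequence of two cuboids with opposite corners $(0,0,0),(2,2,1)$ and $(1,1,0),(3,3,2)$ (Definition \ref{staircase} is easily checked). The XYZ-trace of Algorithm \ref{Alg3} started at $(3,3,2)$ yields, in increasing order, $\mathcal{H}_{Z}=(0,0,0),(0,0,1),(0,1,1),(1,1,1),(1,1,2),(1,2,2),(1,3,2),(2,3,2),(3,3,2)$, which misses the point $(1,1,0)\in P_{Z}^{2}$: at $(1,1,1)$ the $x$-decrement to $(0,1,1)$ is feasible and has priority, so the trace never descends to $(1,1,0)$. Only the upper segment of $P_{Z}^{2}$, between heights $z_{\max}^{1}$ and $z_{\max}^{2}$, lies on $\mathcal{H}_{Z}$; the missing point is recovered only as the union $(1,0,0)\cup(0,1,0)$ of points on $\mathcal{H}_{X}$ and $\mathcal{H}_{Y}$. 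This is precisely why the paper argues by induction on $n$: the chain $\mathcal{H}_{Z}$ covers the part of $P_{Z}^{n}$ above $z_{\max}^{n-1}$, while the remainder lies in $C_{1}\cup\dots\cup C_{n-1}$, which the induction hypothesis places inside the \emph{join} of the three chains, not inside any single chain. Second, your bound on the off-coordinates is a non sequitur: knowing $y_{A_{X}}\leq y_{\max}^{i}$ gives nothing when $y_{A}<y_{\max}^{i}$. What you actually need is that $A_{X}$ is dominated by \emph{every} point of $\mathcal{C}$ with $x$-coordinate $x_{A}$, i.e., that $A_{X}$ is the unique minimal point of that slice.

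That minimality is true, and your plan can be completed, but by a different mechanism than the one you sketch. If $x_{A}>0$, the step of $\mathcal{H}_{X}$ arriving at $A_{X}$ is an $x$-increment, so in the decreasing YZX-run both other moves were infeasible at $A_{X}$: $(x_{A},y_{A_{X}}-1,z_{A_{X}})\notin\mathcal{C}$ and $(x_{A},y_{A_{X}},z_{A_{X}}-1)\notin\mathcal{C}$. Staircases are closed under intersection (the paper proves this later, independently of the present lemma), so $D=A\cap A_{X}\in\mathcal{C}$; if $D\neq A_{X}$, then $D\subsetneq A_{X}$ and $D$, $A_{X}$ agree in the $x$-coordinate, so the chain property (which follows from $(A2)$ for poly-antimatroids just as for antimatroids) applied to $D\subset A_{X}$ produces a feasible point obtained from $A_{X}$ by removing one unit of $y$ or of $z$ --- a contradiction. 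Hence $y_{A_{X}}\leq y_{A}$ and $z_{A_{X}}\leq z_{A}$, and then $A=A_{X}\cup A_{Y}\cup A_{Z}$ as you intended. Such a repaired argument would be genuinely different from (and arguably more direct than) the paper's induction on the number of cuboids; but the proposal as submitted does not contain it, and the two steps it does offer are, respectively, false and invalid, so as it stands the proof has a genuine gap.
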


\begin{proof}
Let us proceed by induction on $n$.

For $n=1$ the XYZ-Boundary tracing algorithm beginning from the point
$(x,y,z_{\max})\in\mathcal{C}$ returns chain $\mathcal{H}_{Z}$ that begins
from the point $(0,0,0)$, moves to $(x_{\min},y_{\min},z_{\max})$, then to
$(x_{\min},y,z_{\max})$, and finishes at the point $(x,y,z_{\max})$. So, this
chain passes over all the points of $P_{Z}$. In the same way, the YZX-Boundary
tracing algorithm returns chain $\mathcal{H}_{X}$ that passes over all the
points of $P_{X}$, and the ZXY-Boundary tracing algorithm returns chain
$\mathcal{H}_{Y}$ that passes over all the points of $P_{Y}$. Thus,
$\mathcal{C}=\mathcal{H}_{X}\vee\mathcal{H}_{Y}\vee\mathcal{H}_{Z}$.

Assume that the proposition is correct for all $k<n$ and prove it for $n$. It
is easy to see that the XYZ-Boundary tracing algorithm begins from some point
$(x,y,z_{\max})\in C_{n}$, reaches the point $(x_{\min}^{n},y_{\min}%
^{n},z_{\max}^{n})$, moves down to the point $(x_{\min}^{n},y_{\min}%
^{n},z_{\max}^{n-1})\in C_{n-1}$, and then continues to build chain
$\mathcal{H}_{Z}$. In the same way, the YZX-Boundary tracing algorithm moves
through the point $(x_{\max}^{n-1},y_{\min}^{n},z_{\min}^{n})\in C_{n-1}$, and
the ZXY-Boundary tracing algorithm moves through the point $(x_{\min}%
^{n},y_{\max}^{n-1},z_{\min}^{n})\in C_{n-1}$. The induction assumption
implies $C_{1}\cup C_{2}\cup...\cup C_{n-1}\subseteq\mathcal{H}_{X}%
\vee\mathcal{H}_{Y}\vee\mathcal{H}_{Z}$.

It remains to show that $C_{n}\subseteq\mathcal{H}_{X}\vee\mathcal{H}_{Y}%
\vee\mathcal{H}_{Z}$. To this end we prove that $P_{X}^{n}\subseteq
\mathcal{H}_{X}\vee\mathcal{H}_{Y}\vee\mathcal{H}_{Z}$, $P_{Y}^{n}%
\subseteq\mathcal{H}_{X}\vee\mathcal{H}_{Y}\vee\mathcal{H}_{Z}$, and
$P_{Z}^{n}\subseteq\mathcal{H}_{X}\vee\mathcal{H}_{Y}\vee\mathcal{H}_{Z}$. We
have already seen that the XYZ-Boundary tracing algorithm returns chain
$\mathcal{H}_{Z}$ that covers the part $[(x_{\min}^{n},y_{\min}^{n},z_{\max
}^{n}),(x_{\min}^{n},y_{\min}^{n},z_{\max}^{n-1})]$ from $P_{Z}^{n}$, and the
remaining part $[(x_{\min}^{n},y_{\min}^{n},z_{\max}^{n-1}),(x_{\min}%
^{n},y_{\min}^{n},z_{\min}^{n})]\subseteq C_{1}\cup C_{2}\cup...\cup C_{n-1}$.
So $P_{Z}^{n}\subseteq\mathcal{H}_{X}\vee\mathcal{H}_{Y}\vee\mathcal{H}_{Z}$.
It is easy to verify two other cases.

Finally, $\mathcal{C}=C_{1}\cup C_{2}\cup...\cup C_{n}\subseteq(\mathcal{H}%
_{X}\vee\mathcal{H}_{Y}\vee\mathcal{H}_{Z})\vee(\mathcal{H}_{X}\vee
\mathcal{H}_{Y}\vee\mathcal{H}_{Z})\vee(\mathcal{H}_{X}\vee\mathcal{H}_{Y}%
\vee\mathcal{H}_{Z})=\mathcal{H}_{X}\vee\mathcal{H}_{Y}\vee\mathcal{H}_{Z}$,
since each $\mathcal{H}$ is a chain.

On the other hand, $\mathcal{H}_{X}\vee\mathcal{H}_{Y}\vee\mathcal{H}%
_{Z}\subseteq\mathcal{C}$, and so $\mathcal{C}=\mathcal{H}_{X}\vee
\mathcal{H}_{Y}\vee\mathcal{H}_{Z}$.
\end{proof}

Theorem \ref{3convex} shows that it is enough to know only three maximal
chains $\mathcal{B}_{X}$,$\mathcal{B}_{Y}$, and $\mathcal{B}_{Z}$ to describe
an $n$-step staircase.

\begin{corollary}
An n-step staircase has convex dimension at most $3$.
\end{corollary}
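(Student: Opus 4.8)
The plan is to read the corollary off directly from Theorem \ref{3convex}, treating it as a bookkeeping step in the spirit of the two-dimensional argument, where the identity $S=\mathcal{B}_{lower}\vee\mathcal{B}_{upper}$ was used to conclude that the convex dimension equals $2$. First I would recall the definition of convex dimension: $c\dim(\mathcal{C})$ is the least number of maximal chains $\emptyset=X_{0}\subset X_{1}\subset\cdots\subset X_{k}=X_{\max}$ (with $X_{i}=X_{i-1}\cup x_{i}$) whose join $\vee$ returns the whole antimatroid. Thus it suffices to exhibit three such chains for an arbitrary $n$-step staircase.

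Second, I would verify that the three objects $\mathcal{B}_{X}$, $\mathcal{B}_{Y}$, $\mathcal{B}_{Z}$ produced by the XYZ-, YZX-, and ZXY-Boundary tracing algorithms genuinely qualify as maximal chains. Each is a monotone increasing $N_{6}$-path from $(0,0,0)$ to $(x_{\max},y_{\max},z_{\max})$, and by construction (Step~3.1) every step increments exactly one coordinate by $1$. Translating back into the multiset language of the preliminaries, this says that each $\mathcal{B}$ is a chain $\emptyset=X_{0}\subset\cdots\subset X_{k}=X_{\max}$ with $X_{i}=X_{i-1}\cup x_{i}$. Since the common length $x_{\max}+y_{\max}+z_{\max}$ equals $|X_{\max}|$, no element is skipped, so each chain is maximal.

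Third, Theorem \ref{3convex} states precisely that $\mathcal{C}=\mathcal{B}_{X}\vee\mathcal{B}_{Y}\vee\mathcal{B}_{Z}$, that is, the full staircase is the join of these three maximal chains. By the definition of convex dimension this immediately gives $c\dim(\mathcal{C})\leq 3$. I would also remark that the bound is phrased as ``at most'' rather than ``exactly'' because degenerate staircases (for instance a single cuboid collapsed onto an axis or a coordinate plane) can be recovered by one or two chains, so equality need not hold.

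The only point requiring any care — and it is minor — is the identification of the join operation $\vee$ with the word ``union'' appearing in the definition of convex dimension, together with the check that the algorithmic output is formally a maximal chain and not merely a geometric $N_{6}$-path. I do not expect a genuine obstacle here: all of the analytic content has already been absorbed into Theorem \ref{3convex}, and the corollary is its immediate consequence.
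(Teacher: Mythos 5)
Your proposal is correct and matches the paper's own (implicit) argument: the paper derives the corollary directly from Theorem \ref{3convex}, noting that the three maximal chains $\mathcal{B}_{X}$, $\mathcal{B}_{Y}$, $\mathcal{B}_{Z}$ suffice to describe the staircase, exactly as you do. Your additional checks --- that each algorithmic output is genuinely a maximal chain of length $x_{\max}+y_{\max}+z_{\max}$, and that the join $\vee$ is what the definition of convex dimension requires --- merely make explicit what the paper leaves tacit.
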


Note that the three points $(0,0,1)$, $(0,1,0)$ and $(1,0,0)$ cannot be
covered by two chains only, since each such point (multiset) cannot be formed
as a union of smaller multisets. So, there exist $n$-step staircases of convex
dimension $3$.

However, the convex dimension of an arbitrary three-dimensional
poly-antimatroid may be arbitrarily large \cite{Eppstein}. Let $S$ be a set of
points:
\[
S=\{(x,y,z):(0\leq x,y\leq N)\wedge(0\leq z\leq1)\wedge(z=1\Rightarrow x+y\geq
N)\}
\]

It is easy to check that $S$ is a three-dimensional poly-antimatroid. Consider
$N+1$ points $(x,y,1)$ with $x+y=N$. Since each of these points cannot be
represented as a union of any points from $S$ with smaller coordinates, the
convex dimension of $S$ is at least $N+1$ \cite{Eppstein}.

In order to characterize the family of three-dimensional poly-antimatroids of
convex dimension $3$, consider a particular case of antimatroids called
\textit{poset antimatroids} \cite{Greedoids}. A poset antimatroid has as its
feasible sets the lower sets of a poset (partially ordered set). The poset
antimatroids can be characterized as the unique antimatroids which are closed
under intersection \cite{Greedoids}.\ We extend this definition to poly-antimatroids.

\begin{definition}
A poly-antimatroid is called a poset poly-antimatroid if it is closed under intersection.
\end{definition}

Now, note that $n$-step staircases are closed under intersection too. Indeed,
consider two points $A,B\in\mathcal{C}$. Let $A\in C_{i}$, $B\in C_{j}$, and
$i\leq j$. From Definition \ref{staircase} (b) it follows that $\min
(x_{A},x_{B})\geq x_{\min}^{i}$. On the other hand, (Definition
\ref{staircase} (d)), $\min(x_{A},x_{B})\leq x_{\max}^{i}$. So, $A\cap
B=(\min(x_{A},x_{B}),\min(y_{A},y_{B}))\in C_{i}$, i.e., $A\cap B\in
\mathcal{C}.$

So, our conjectures are as follows.

\begin{conjecture}
A three-dimensional poset poly-antimatroid is a step staircase.
\end{conjecture}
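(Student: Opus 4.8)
The plan is to characterize a three-dimensional poset poly-antimatroid $S$ as a finite, accessible sublattice of $\mathbb{Z}^3$ that is closed under coordinatewise join ($\vee=\cup$) and meet ($\wedge=\cap$), contains $(0,0,0)$, and has a top element $M=(x_{\max},y_{\max},z_{\max})$, and then to exhibit a regular sequence of cuboids whose union is $S$. First I would record, exactly as in the two-dimensional development, that the chain property forces $S$ to be orthogonally convex: if two comparable points lie on a common axis-parallel line, every intermediate lattice point lies in $S$, because an increasing chain joining them can only change the coordinate along that line. It is worth noting that orthogonal convexity together with closure under union already holds for the large-convex-dimension example $S=\{(x,y,z): z=1\Rightarrow x+y\ge N\}$, so closure under intersection must be the property that does the real work.

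Next I would attach to every point $p\in S$ the \emph{maximal box} $B(p)=[p,M(p)]$ having $p$ as its minimum corner. The lemma that makes $M(p)$ well defined is that the boxes in $S$ with a fixed minimum corner $p$ are closed under taking the join of their top corners: if $[p,q_1]\subseteq S$ and $[p,q_2]\subseteq S$ and $p\le r\le q_1\vee q_2$, then setting $u=r\wedge q_1\in[p,q_1]$ and $v=r\wedge q_2\in[p,q_2]$ and using distributivity of $\mathbb{Z}^3$ gives $u\vee v=r\wedge(q_1\vee q_2)=r$, so $r=u\vee v\in S$. Dually, closure under intersection gives for each $p$ a well-defined maximal box with $p$ as its \emph{maximum} corner. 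I would then take $C_1,\ldots,C_n$ to be the boxes that are maximal under inclusion; since $S$ is finite these are finite in number, their union is $S$ (each $p$ lies in $B(p)$, hence in some inclusion-maximal box), and the box reaching down from $M$ together with the maximal box based at the origin are among them, supplying the endpoints demanded by Definition \ref{staircase}(a) and the global top.

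The heart of the argument, and the step I expect to be the main obstacle, is to show that these inclusion-maximal boxes can be linearly ordered into a monotone staircase: that their minimum corners $m_1,\ldots,m_n$ form a chain in the product order (Definition \ref{staircase}(b)), that their maximum corners $M_1,\ldots,M_n$ form a chain ordered consistently the same way (Definition \ref{staircase}(d)), with distinctness of consecutive boxes supplying the required strict inequality, and that consecutive boxes overlap (Definition \ref{staircase}(c)). This is precisely where closure under intersection is indispensable, since the high-convex-dimension example shows that branching of boxes is possible without it. I would prove comparability of two minimum corners $m_i,m_j$ by forming $m_i\wedge m_j\in S$ and arguing that if $m_i,m_j$ were incomparable then one of $B(m_i),B(m_j)$ would be absorbed into a strictly larger box based at $m_i\wedge m_j$, contradicting inclusion-maximality; the consistent ordering of the top corners should follow from the dual construction using maximal boxes defined by their top corners, via closure under intersection. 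Condition (c) would then come from $N_6$-connectivity of $S$, which forbids a gap between consecutive boxes in the chain and forces $x_{\min}^{i+1}\le x_{\max}^{i}$ and its analogues. Making the comparability argument fully rigorous — in particular handling the case where $m_i$ and $m_j$ differ in all three coordinates, and verifying that the absorbing box genuinely lies in $S$ — is where the detailed work, and the real difficulty, will concentrate.
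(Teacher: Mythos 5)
Note first that the paper itself offers no proof of this statement: it is posed as an open \emph{conjecture}, and the paper only establishes the converse direction, namely that every $n$-step staircase is a poly-antimatroid and is closed under intersection, hence is a poset poly-antimatroid. So your proposal must stand entirely on its own. Its preliminary steps are sound: the well-definedness of the maximal box $B(p)=[p,M(p)]$ via distributivity of $\mathbb{Z}^{3}$ is correct (indeed, if $[p,q_{1}]\subseteq S$ and $[p,q_{2}]\subseteq S$ then $[p,q_{1}\vee q_{2}]\subseteq S$), and so is the observation that $S$ is the union of its inclusion-maximal boxes, with the box based at the origin and the box hanging from $(x_{\max},y_{\max},z_{\max})$ among them.

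The genuine gap is the step you yourself call the heart of the argument, and it is not merely unproven --- the claim on which it rests is false. You propose to take as $C_{1},\ldots,C_{n}$ \emph{all} inclusion-maximal boxes and to show their minimum corners form a chain, arguing that incomparable minimum corners $m_{i},m_{j}$ would force one of $B(m_{i}),B(m_{j})$ to be absorbed into a larger box based at $m_{i}\wedge m_{j}$. Consider the $3$-step staircase $\mathcal{C}=C_{1}\cup C_{2}\cup C_{3}$ with $C_{1}=[(0,0,0),(1,1,1)]$, $C_{2}=[(1,1,0),(2,2,1)]$, $C_{3}=[(2,2,0),(3,3,1)]$; by the paper's own results this is a poset poly-antimatroid. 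The boxes $[(1,0,0),(1,2,1)]$ and $[(0,1,0),(2,1,1)]$ both lie in $\mathcal{C}$ and both are inclusion-maximal: for the first, each of the extensions would have to contain $(0,2,0)$, $(2,0,0)$, $(1,3,0)$ or a point with $z=2$, none of which lies in $\mathcal{C}$, and symmetrically for the second. Their minimum corners $(1,0,0)$ and $(0,1,0)$ are incomparable (as are their maximum corners), and neither box can be absorbed into a box based at $(1,0,0)\wedge(0,1,0)=(0,0,0)$, since $[(0,0,0),(1,2,1)]$ contains $(0,2,0)\notin\mathcal{C}$ and $[(0,0,0),(2,1,1)]$ contains $(2,0,0)\notin\mathcal{C}$. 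Thus the family of \emph{all} inclusion-maximal boxes violates conditions (b) and (d) of Definition \ref{staircase} even when $S$ genuinely is a staircase, and your construction collapses at exactly this point. What the conjecture actually demands is the selection of a suitable sub-chain of boxes (discarding some inclusion-maximal ones, such as the two above) whose union still covers $S$, together with a proof that such a chain always exists in a poset poly-antimatroid; your proposal leaves precisely that --- which is the entire content of the conjecture --- open.
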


\begin{corollary}
The convex dimension of a three-dimensional poset poly-antimatroid is at most
$3$.
\end{corollary}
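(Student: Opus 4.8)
The plan is to obtain this statement as an immediate consequence of the preceding Conjecture together with the Corollary asserting that every $n$-step staircase has convex dimension at most $3$. Assuming the Conjecture, any three-dimensional poset poly-antimatroid $\mathcal{C}$ coincides with some $n$-step staircase $C_{1}\cup C_{2}\cup\ldots\cup C_{n}$. Applying Theorem \ref{3convex} to this staircase yields the decomposition $\mathcal{C}=\mathcal{B}_{X}\vee\mathcal{B}_{Y}\vee\mathcal{B}_{Z}$ into three maximal chains, and since $c\dim(\mathcal{C})$ is by definition the minimum number of maximal chains whose join recovers $\mathcal{C}$, the bound $c\dim(\mathcal{C})\leq 3$ follows at once.

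First I would invoke the Conjecture to rewrite $\mathcal{C}$ as a regular sequence of cuboids in the sense of Definition \ref{staircase}. The remaining step is then purely mechanical: the three boundary-tracing algorithms, run in search orders $(x,y,z)$, $(y,z,x)$ and $(z,x,y)$, produce the monotone increasing chains $\mathcal{B}_{Z}$, $\mathcal{B}_{X}$ and $\mathcal{B}_{Y}$, and Theorem \ref{3convex} guarantees that their join is all of $\mathcal{C}$. The example with the three atoms $(0,0,1)$, $(0,1,0)$ and $(1,0,0)$ shows that no two chains suffice, so the bound is sharp and the convex dimension equals exactly $3$ whenever all three atoms are present.

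The hard part will be the Conjecture itself, in which the entire difficulty is concentrated: one must show that closure under intersection forces the staircase structure on a three-dimensional poly-antimatroid. I would attempt this by showing that the maximal cuboids $[\,A\cap B,\,A\cup B\,]$ built on pairs of points of $\mathcal{C}$ can be linearly ordered into a regular sequence obeying conditions (a)--(d) of Definition \ref{staircase}. The crux is to argue that intersection-closure, precisely as in the planar Lemma \ref{Square_P}, prevents two such cuboids from being positioned so that their $\min$-corners and $\max$-corners fail to increase monotonically; any such skew configuration would produce a pair of non-comparable points whose filled box contradicts the staircase shape. Once the Conjecture is established, the present Corollary is immediate.
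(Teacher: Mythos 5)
Your proposal matches the paper's own derivation: the Corollary is presented there as a direct, conditional consequence of the (unproven) Conjecture that every three-dimensional poset poly-antimatroid is a step staircase, combined with Theorem \ref{3convex} and its corollary that an $n$-step staircase has convex dimension at most $3$. Your additional sketch for attacking the Conjecture itself (and the sharpness remark about the three atoms) goes beyond what the paper establishes, but the derivation of the stated Corollary is essentially identical to the paper's.
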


Moreover,

\begin{conjecture}
The convex dimension of an $n$-dimensional poset poly-antimatroid is at most
$n$.
\end{conjecture}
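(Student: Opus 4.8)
The plan is to recast the statement lattice-theoretically and then bound the convex dimension by the width of an associated poset. Writing $E=\{x_{1},\dots,x_{n}\}$, the set $S$ lives in $\mathbb{Z}^{n}$, contains $0$, and is closed under the componentwise maximum $\cup$; being a \emph{poset} poly-antimatroid it is closed under the componentwise minimum $\cap$ as well. Hence $S$ is a finite sublattice of the distributive lattice $\mathbb{Z}^{n}$ and so is itself a finite distributive lattice, and by Birkhoff's representation $S\cong J(P)$, the lattice of order ideals of its poset $P$ of join-irreducibles. Since the covers of $S$ are unit steps (the chain property forces a $+e_{i}$ saturated chain between comparable feasible multisets, where $e_{i}$ is the $i$-th unit vector), a maximal chain $\emptyset=X_{0}\subset\cdots\subset X_{k}=X_{\max}$ is exactly a linear extension of $P$, and the operation $\vee$ of the Corollary is the lattice join, i.e. the union of ideals. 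Thus $c\dim(S)$ is an invariant of the lattice $S$, and it suffices to prove both $c\dim(J(P))\le w(P)$ and $w(P)\le n$, where $w(P)$ denotes the width of $P$ (the largest size of an antichain).

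The key step, and the one place where closure under intersection is indispensable, is the bound $w(P)\le n$. Color each join-irreducible $j$ by the coordinate $c(j)\in\{1,\dots,n\}$ in which it differs from its unique lower cover $j^{-}=j-e_{c(j)}$; since $j$ is join-irreducible, every element strictly below $j$ is $\le j^{-}$. I claim that two join-irreducibles $j,j'$ of the same color $i$ must be comparable, so each color class is a chain, $P$ is covered by at most $n$ chains, and $w(P)\le n$ by Dilworth's theorem. Suppose instead that $j,j'$ are incomparable. Because $S$ is closed under intersection, $m:=j\cap j'\in S$, and $m<j$ strictly; hence $m\le j^{-}=j-e_{i}$, so $\min(j_{i},j'_{i})=m_{i}<j_{i}$, giving $j'_{i}<j_{i}$. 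The symmetric argument applied to $m<j'$ gives $j_{i}<j'_{i}$, a contradiction. It is exactly this use of $\cap$ that is unavailable for the poly-antimatroid of convex dimension $N+1$ exhibited above, which is not closed under intersection.

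For the inequality $c\dim(J(P))\le w(P)$ I would argue directly with ideals. For a linear extension $L$ and $p\in P$ write $\mathrm{pre}_{L}(p)$ for the initial segment of $L$ up to and including $p$. Chains $B_{1},\dots,B_{k}$ coming from extensions $L_{1},\dots,L_{k}$ satisfy $S=B_{1}\vee\cdots\vee B_{k}$ provided that for every $p$ some $L_{t}$ realizes $\mathrm{pre}_{L_{t}}(p)=\downarrow p$, the principal ideal of $p$: indeed, given an ideal $I$ and $q\in I$, choosing $p$ maximal in $I$ with $q\le p$, the corresponding $L_{t}$ has the prefix $\downarrow p\subseteq I$, so $q$ lies in the longest prefix of $L_{t}$ contained in $I$, and $I$ is the union of these prefixes. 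Now take a Dilworth partition $P=C_{1}\cup\cdots\cup C_{w(P)}$; for a chain $C_{t}=\{c_{1}<\cdots<c_{r}\}$ build $L_{t}$ listing first a linear extension of $\downarrow c_{1}$, then of $\downarrow c_{2}\setminus\downarrow c_{1}$, and so on up to $\downarrow c_{r}$, and finally the remaining elements. This is a genuine linear extension and realizes $\mathrm{pre}_{L_{t}}(c_{s})=\downarrow c_{s}$ for every $c_{s}\in C_{t}$; hence $w(P)$ extensions suffice. Combining the two bounds yields $c\dim(S)\le w(P)\le n$.

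The main obstacle is the colored-chain claim of the second paragraph; the remaining lattice and order theory is standard. Once one sees that intersection-closure collapses the join-irreducibles of each coordinate into a single chain, the bounds fall out, and the argument specializes correctly to the three-dimensional staircases of the previous section, where the three color classes are precisely the chains produced by the $XYZ$-, $YZX$- and $ZXY$-tracing algorithms.
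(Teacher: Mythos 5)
You are comparing against a proof that does not exist: the paper leaves this statement as a conjecture, and the only evidence it actually proves is the special case of $n$-step staircases in ${\mathbb{Z}}^{3}$, treated geometrically via explicit boundary-tracing algorithms (with the general three-dimensional case resting on the further, unproven, conjecture that poset poly-antimatroids are staircases). Your argument, as far as I can verify, is a correct and complete proof of the conjecture itself, and it takes a route the paper never attempts. Both pillars hold up. First, closure under componentwise min and max makes $S$ a finite sublattice of ${\mathbb{Z}}^{n}$, hence a finite distributive lattice with $S\cong J(P)$, and the chain property (which does extend to poly-antimatroids) forces unit-step covers, so the coloring of a join-irreducible by the coordinate separating it from its unique lower cover is well defined; your two-sided contradiction for incomparable same-colored join-irreducibles (from $j\cap j'\le j-e_{i}$ and $j\cap j'\le j'-e_{i}$) is sound, and it is exactly the step that fails for the paper's Eppstein-type example of large convex dimension, which is not intersection-closed. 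Second, your block construction of linear extensions along $\downarrow c_{1}\subset\downarrow c_{2}\subset\cdots$ does produce genuine linear extensions realizing each $\downarrow c_{s}$ as a prefix, and the longest-prefix argument correctly exhibits every ideal as a join of one element per chain, which is the paper's $\vee$ operation. Two minor quibbles: deducing $w(P)\le n$ from the cover by color classes is mere pigeonhole, not Dilworth; and Dilworth can be dropped entirely by feeding the at most $n$ color classes themselves into your prefix construction, giving $c\dim(S)\le n$ directly --- though your detour through the width buys the sharper statement $c\dim(S)=w(P)$, since each join-irreducible must occur in any generating family of maximal chains and incomparable ones cannot share a chain. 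The comparison, then: the paper's approach is geometric and constructive but conditional and limited to dimension three, while your lattice-theoretic argument (Birkhoff representation plus a chain-cover bound) settles the conjecture in all dimensions and, in particular, establishes the paper's corollary on three-dimensional poset poly-antimatroids unconditionally, without classifying them as staircases.
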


\begin{acknowledgement}
The authors are grateful to the referees for their constructive criticism and
correction of style.
\end{acknowledgement}

\end{document}